\newtheorem{theorem}{Theorem}[section]
\newtheorem*{theorem*}{Theorem}
\newtheorem{lemma}[theorem]{Lemma}
\newtheorem{proposition}[theorem]{Proposition}
\newtheorem{corollary}[theorem]{Corollary}
\theoremstyle{definition}
\newtheorem{assumption}[theorem]{Assumption}
\theoremstyle{remark}
\newtheorem{remark}[theorem]{Remark}
\numberwithin{equation}{section}
\newcommand\Z{\ensuremath{\mathbb Z}}\newcommand\A{\ensuremath{\mathbb A}}
\newcommand\Q{\ensuremath{\mathbb Q}}
\newcommand\C{\ensuremath{\mathbb C}}
\newcommand\Aut{\operatorname{Aut}}
\newcommand\cycl{\operatorname{cycl}}
\newcommand\Frob{\operatorname{Frob}}
\newcommand\Gal{\operatorname{Gal}}
\newcommand\GL{\operatorname{GL}}
\newcommand\Hom{\operatorname{Hom}}
\newcommand\Ind{\operatorname{Ind}}
\newcommand{\fp}{{\mathfrak{p}}}
\newcommand{\fP}{{\mathfrak{P}}}
\newcommand{\fc}{{\mathfrak{c}}}
\newcommand{\cE}{{\mathcal{E}}}
\renewcommand{\L}{\mathcal{L}}
\newcommand{\tto}[1]{%
\ifthenelse{\equal{#1}{}}{\to}{\stackrel{#1}{\to}}}
\def\cO{{\mathcal O}}
\def\p{\mathfrak p}
\newcommand{\comp}{\begin{picture}(6,5)(-3,-2)\put(0,1){\circle{2}} \end{picture}}\def\circ{\comp}
\def\res{\operatorname{res}}
\newcommand{\ra}{\rightarrow}
\newcommand{\lra}{\longrightarrow}
\def\XXint#1#2#3{{\setbox0=\hbox{$#1{#2#3}{\int}$}
\vcenter{\hbox{$#2#3$}}\kern-.5\wd0}}
\newcommand{\bff}{{\bf f}}
\newcommand{\bfg}{{\bf g}}
\newcommand{\bfh}{{\bf h}}
\newcommand\coh{\operatorname{H}}
\newcommand\norm{\operatorname{N}}
\newcommand{\Sel}{\operatorname{Sel}}
\renewcommand{\hom}{\mathrm{Hom}}
\begin{document}

\title[Triple product $p$-adic $L$-functions and non-crystalline classes]{Special values of triple-product $p$-adic L-functions and non-crystalline diagonal classes}

\author{Francesca Gatti, Xavier Guitart, Marc Masdeu, and Victor Rotger}
\address{}
\email{}
\thanks{This project has received funding from the European Research Council (ERC) under the European Union's Horizon 2020 research and innovation programme (grant agreement No 682152). The authors were also partially supported by projects MTM2015-66716-P and  MTM2015-63829-P}

\date{\today}

\dedicatory{}

\begin{abstract}
The main purpose of this note is to understand the arithmetic encoded in the special value of the $p$-adic $L$-function $\L_p^g(\bff,\bfg,\bfh)$ associated to a triple of modular forms $(f,g,h)$ of weights $(2,1,1)$, in the case where the classical $L$-function $L(f\otimes g\otimes h,s)$ --which typically has sign $+1$-- does not vanish at its central critical point $s=1$. When $f$ corresponds to an elliptic curve $E/\Q$ and the classical $L$-function vanishes,  the Elliptic Stark Conjecture of Darmon--Lauder--Rotger predicts that $\L_p^g(\bff,\bfg,\bfh)(2,1,1)$ is either $0$ (when the order of vanishing of the complex $L$-function is $>2$) or related to logarithms of global points on $E$ and a certain Gross--Stark unit associated to $g$. We complete the picture proposed by the Elliptic Stark Conjecture by providing a formula for the value $\L_p^g(\bff,\bfg,\bfh)(2,1,1)$ in the case where $L(f\otimes g\otimes h,1)\neq 0$.
\end{abstract}
\maketitle 

\tableofcontents

\vspace{0.15cm}

\section{Introduction}\label{sec: Introduction}

Let $E$ be an elliptic curve defined over $\Q$ and let $f\in S_2(N_f)$ be the newform attached to $E$. Let 
\begin{equation*}
g\in S_1(N_g,\chi)_L, \qquad h\in S_1(N_h,\bar\chi)_L
\end{equation*}
be two cuspforms of weight one, inverse nebentype characters and with Fourier coefficients contained in a number field $L$. Let $\rho_g$ and $\rho_h$ be the Artin representations attached to $g$ and $h$. The tensor product $\rho_g\otimes\rho_h$ is a self-dual Artin representation of dimension $4$ of the form $$\rho:=\rho_g\otimes\rho_h:\Gal(H/\Q)\hookrightarrow\Aut(V_g\otimes V_h)\cong\GL_4(L),$$  where $H/\Q$ is a finite extension.

In this setting, the complex $L$-function $L(E\otimes\rho,s)$ attached to the (Tate module $V_p(E)$ of the) elliptic curve $E$ twisted by the Artin representation $\rho$ coincides with the Garrett--Rankin $L$-function $L(f\otimes g\otimes h,s)$ attached to the triple $(f,g,h)$ of modular forms. 
By multiplying this $L$-function by an appropriate archimedean factor $L_\infty(f\otimes g\otimes h, s)$ one obtains an entire function $\Lambda(f\otimes g\otimes h,s)$ which satisfies a functional equation of the form
\begin{equation}\label{eq: functional equation triple product (2,1,1)}
\Lambda(f\otimes g\otimes h, s)=\epsilon\cdot\Lambda(f\otimes g\otimes h, 2-s),
\end{equation}
where $\epsilon\in\{ \pm1 \}$.
Moreover, $L_\infty(f\otimes g\otimes h,s)$ does not have zeros nor poles at $s=1$.

Denote $N_g$ and  $N_h$ the level of $g$ and $h$ respectively. The sign can be written as a product of local factors $\epsilon=\prod_{v}\epsilon_v$ where $v$ runs over the places of $\Q$, and $\epsilon_v=+1$ if $v$ is a finite prime which does not divide $\operatorname{lcm}(N_f,N_g,N_h)$ or if $v=\infty$. 
We will work under the following assumption
\begin{assumption}\label{ass: local signs}
$	\epsilon_v=+1  \quad \text{ for all } v.$
\end{assumption}
Assumption \ref{ass: local signs} holds most of the time: this is the case for instance if the greatest common divisor of the levels of $f,g$ and $h$ is $1$.

Fix an odd prime number $p$ such that 
\begin{equation*}
p\nmid N_fN_gN_h,
\end{equation*} 
and denote by $\alpha_g,\beta_g$ the eigenvalues for the action of the Frobenius element at $p$ acting on $V_g$. We use the analogous notation for $h$, and we assume
\begin{equation*}
\alpha_g\neq\beta_g, \text{ and } \ \alpha_h\neq\beta_h.
\end{equation*}
Fix once and for all completions $H_p, L_p$ of the number fields $H, L$ at primes above $p$.

Choose an ordinary $p$-stabilisation of $g$, namely $g_\alpha(z):=g(z)-\beta_gg(pz)$ and define analogously $h_\alpha$. Let $\bff,\bfg,\bfh$ be Hida families passing through the unique ordinary $p$-stabilisation of $f$ and  $g_\alpha$ and $h_\alpha$  respectively, and consider the Garret--Hida $p$-adic $L$-function $$\L_p^g(\bff, \bfg,\bfh) $$ of \cite{DR1} associated to the specific choice of test vectors $(\breve{\bff},\breve{\bfg},\breve{\bfh})$ of \cite[Chap. 3]{Hsieh}. This $p$-adic $L$-function interpolates the square-roots of the central values of the classical $L$-function $L(\breve{f}_k\otimes \breve{g}_\ell\otimes \breve{h}_m,s)$ attached to the specializations of the Hida families at classical points of weights $k,\ell,m$  with $k,\ell,m\geq 2$ and $\ell\geq k+m$. 
Notice that the point $(2,1,1)$, which corresponds to our triple of modular forms $(f,g,h)$, lies outside the region of classical interpolation for $\L_p^g(\bff, \bfg,\bfh)$.
We are interested in studying the value 
\begin{equation*}
\L_p^g(\bff, \bfg,\bfh)(2,1,1)
\end{equation*}
under the following assumption:
\begin{assumption}\label{ass: rank 00}
	$L(E\otimes\rho,1)\neq0 \text{ and } \Sel_p(E\otimes\rho)=0.$
\end{assumption}

Here $\Sel_p(E\otimes\rho)$ denotes the Bloch--Kato Selmer group attached to the representation $$V:=V_p(E)\otimes V_g\otimes V_h.$$

Under Assumption \ref{ass: local signs}, the sign $\epsilon$ of the functional equation \eqref{eq: functional equation triple product (2,1,1)}  is $+1$, and thus the order of vanishing of $L(E\otimes \rho,s)$ at $s=1$ is even. One hence expects that $L(E\otimes\rho,1)$ is generically nonzero. If this $L$-value is nonzero, by \cite{DR2} we know that the $\rho$-isotypical component $E(H)^{\rho}:=\hom_{G_\Q}(V_g\otimes V_h,E(H)\otimes L)$ of the Mordell-Weil group $E(H)$ is trivial. By the Shafarefich--Tate conjecture one also expects the Selmer group $\Sel_p(E\otimes\rho)$ to be trivial, although this conjecture is widely open. 
It is also worth noting that the value $\L_p^g(\bff, \bfg,\bfh)(2,1,1)$ in the setting in which the complex $L$-function $L(E\otimes\rho,s)$ vanishes at $s=1$ has been analyzed in \cite{DLR}, where the authors give a conjectural formula for this $p$-adic value as a $2\times 2$-regulator of $p$-adic logarithms of global points.

Under our running assumption \ref{ass: rank 00} one can not expect a similar formula for the above $p$-adic $L$-value, as no global points are naturally present in this scenario.
The {\em main result of this paper} consists in an explicit formula for the value $\L_p^g(\bff, \bfg,\bfh)(2,1,1)$  which involves the algebraic part of the classical $L$-value $L(E\otimes\rho,1)$ and the {\em logarithm} of a canonical non-crystalline class along a certain {\em crystalline direction}. 

In \S\ref{sec: Selmer groups} we recall the basic definitions on Selmer groups and we give a precise description of the relaxed $p$-Selmer group $\Sel_{(p)}(E\otimes\rho)$ under Assumption \ref{ass: rank 00}. More precisely,  the projection to the singular quotient gives an isomorphism
\begin{equation}\label{eq: iso for relaxed selmer}
\partial_p:\Sel_{(p)}(E\otimes\rho)\overset{\cong}{\longrightarrow}\coh^1_s(\Q_p,V).
\end{equation}
Let $V_g^\alpha, V_g^\beta$, with basis $v_g^\alpha, v_g^\beta$ respectively, be the eigenspaces of $V_g$ for the action of $\Frob_p$ with eigenvalues $\alpha_g, \beta_g$, and use the analogous notation for $V_h$. The $G_{\Q_p}$-representation $V$ decomposes as a direct sum as
\begin{equation*}
V= V^{\alpha\alpha}\oplus V^{\alpha\beta}\oplus V^{\beta\alpha}\oplus V^{\beta\beta},
\end{equation*} 
where $V^{\alpha\alpha}:=V_pE\otimes V_g^\alpha\otimes V_h^\alpha$ and similarly for the other pieces.
It induces the decomposition 
\begin{equation}\label{eq: decomp H1s}
\coh^1_s(\Q_p,V)= \coh^1_s(\Q_p,V^{\alpha\alpha})\oplus \coh^1_s(\Q_p,V^{\alpha\beta})\oplus \coh^1_s(\Q_p,V^{\beta\alpha})\oplus \coh^1_s(\Q_p,V^{\beta\beta}),
\end{equation} 
and the Bloch--Kato dual exponential gives isomorphisms
\begin{equation*}
\exp^*_{\alpha\alpha}:\coh^1_s(\Q_p,V^{\alpha\alpha})\overset{\cong}{\longrightarrow}L_p
\end{equation*}
and similarly for the other pieces of the decomposition \eqref{eq: decomp H1s}.
Combining it with \eqref{eq: iso for relaxed selmer}, we get a basis 
\begin{equation*}
\xi^{\alpha\alpha}, 	\xi^{\alpha\beta}, 	\xi^{\beta\alpha}, 	\xi^{\beta\beta}
\end{equation*}
for $\Sel_{(p)}(E\otimes\rho)$ characterised by the fact that
\begin{equation*}
\partial_p\xi^{\alpha\alpha}\in\coh_s^1(\Q_p,V^{\alpha\alpha}) \text{ and } \exp^*_{\alpha\alpha}\partial_p\xi^{\alpha\alpha}=1,
\end{equation*}
and similarly for $	\xi^{\alpha\beta}, 	\xi^{\beta\alpha}, 	\xi^{\beta\beta}$.

The $G_{\Q_p}$-cohomology of $V$ and its submodule of crystalline classes $\coh^1_f(\Q_p,V)\subseteq\coh^1(\Q_p,V)$ also have decompositions analogous to \eqref{eq: decomp H1s}. Moreover, if 
\begin{equation*}
\pi_{\alpha\beta}:\coh^1(\Q_p,V)\longrightarrow\coh^1(\Q_p,V^{\alpha\beta})
\end{equation*}
denotes the projection, then $\pi_{\alpha\beta}\xi^{\beta\beta}$ lies in $\coh^1_f(\Q_p,V^{\alpha\beta})$.
Finally, we can write
\begin{equation*}
\pi_{\alpha\beta}\xi^{\beta\beta}=R_{\beta\alpha}\otimes v_g^\alpha\otimes v_h^\beta\in(E(H_p)\otimes V_{g}^\alpha\otimes V_h^\beta)^{G_{\Q_p}}\cong\coh^1_f(\Q_p,V^{\alpha\beta})
\end{equation*}
where $R_{\beta\alpha}\in E(H_p)$ is a local point on which $\Frob_p$ acts as multiplication by $\beta_g\alpha_h$.

We can finally state the main result of the paper.

\begin{theorem*}[cf Theorem~\ref{thm: main}]
	Under Assumptions \ref{ass: p doesnt divide the levels} and \ref{ass: rank 00},
	\begin{equation}\label{eq: main thm intro}
	\L_p^g(\bff, \bfg,\bfh)(2,1,1) = \dfrac{A \cdot\cE  }{\pi\langle f, f\rangle} \times \dfrac{\log_p(R_{\beta\alpha})}{\L_{g_\alpha}}\times\sqrt{L(E\otimes\rho,1)},
	\end{equation}
	where $A\in\Q^\times$ is an explicit number, $\cE\in L_p$ is a product of Euler factors, $\langle f, f\rangle$ denotes the Petersson norm of $f$, $\L_{g_\alpha}\in H_p$ is an element on which $\Frob_p$ acts as multiplication by $\frac{\beta_g}{\alpha_g}$ and which only depends on $g_\alpha$, and $\log_p:E(H_p)\rightarrow H_p$ denotes the $p$-adic logarithm.
	
\end{theorem*}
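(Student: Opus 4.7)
The plan is to triangulate the value $\L_p^g(\bff,\bfg,\bfh)(2,1,1)$ through three successive identifications: (i) a $p$-adic reciprocity law expressing it as a Bloch--Kato logarithm of a global diagonal class $\kappa$, (ii) a Selmer-theoretic decomposition of $\kappa$ against the basis $\{\xi^{\alpha\alpha},\xi^{\alpha\beta},\xi^{\beta\alpha},\xi^{\beta\beta}\}$ of $\Sel_{(p)}(E\otimes \rho)$ furnished by Assumption \ref{ass: rank 00}, and (iii) an identification of the surviving coefficient with $\sqrt{L(E\otimes \rho,1)}$ via Ichino's triple-product formula.

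For step (i), I would construct a global class $\kappa=\kappa(f,g_\alpha,h_\alpha)\in\coh^1(\Q,V)$ as the specialisation at weight $(2,1,1)$ of the three-variable diagonal class attached to the triple of Hida families, built from étale Abel--Jacobi images of generalised Gross--Kudla--Schoen cycles on products of modular curves. Extending the reciprocity law of Darmon--Rotger and Bertolini--Seveso--Venerucci from the classical balanced region to the limit point $(2,1,1)$ should then yield
\begin{equation*}
\L_p^g(\bff,\bfg,\bfh)(2,1,1) \;=\; \dfrac{A\cdot \cE}{\pi\langle f,f\rangle}\cdot\dfrac{\log_p\bigl(\pi_{\alpha\beta}\,\kappa\bigr)}{\L_{g_\alpha}},
\end{equation*}
where the Petersson period $\langle f,f\rangle$, the Euler factor $\cE$, the normalisation $\L_{g_\alpha}$ and the rational constant $A$ record de Rham/crystalline period comparisons built into the Hsieh test vectors $(\ubff,\ubfg,\ubfh)$. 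The presence of the projection $\pi_{\alpha\beta}$ (rather than $\partial_p$) is exactly the signature of $(2,1,1)$ lying outside the balanced interpolation range, which forces an expression as a logarithm rather than as an $L$-value.

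For step (ii), a standard specialisation argument at primes $\ell\neq p$ shows that $\kappa\in\Sel_{(p)}(E\otimes\rho)$. Decomposing $\kappa=c_{\alpha\alpha}\xi^{\alpha\alpha}+c_{\alpha\beta}\xi^{\alpha\beta}+c_{\beta\alpha}\xi^{\beta\alpha}+c_{\beta\beta}\xi^{\beta\beta}$, the key observation is that among the four basis classes only $\xi^{\beta\beta}$ has a nonzero crystalline $\pi_{\alpha\beta}$-projection, namely $\pi_{\alpha\beta}\xi^{\beta\beta}=R_{\beta\alpha}\otimes v_g^\alpha\otimes v_h^\beta$, while $\pi_{\alpha\beta}\xi^{\alpha\alpha}$, $\pi_{\alpha\beta}\xi^{\alpha\beta}$ and $\pi_{\alpha\beta}\xi^{\beta\alpha}$ all have nontrivial singular parts (by the very dual-exponential characterisation of the $\xi$'s and the vanishing of $\exp^*$ on $\coh^1_f$). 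Hence $\log_p(\pi_{\alpha\beta}\kappa)=c_{\beta\beta}\cdot\log_p(R_{\beta\alpha})$ after the normalisation encoded in the basis.

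For step (iii) one pins down $c_{\beta\beta}$ by pairing $\kappa$, via Poitou--Tate global duality, against a dual global class arising from the Rankin--Selberg integral representation of $L(f\otimes g\otimes h,s)$, and invoking Ichino's formula, which expresses the central value $L(E\otimes\rho,1)$ as the square of a trilinear period on $\PGL_2$. This is the source of the square root: one naturally recovers $c_{\beta\beta}=\sqrt{L(E\otimes\rho,1)}$ up to explicit rational constants absorbed into $A$. The main obstacle I anticipate is step (i): extending the reciprocity law to the out-of-range weight-one limit requires careful control of overconvergent differentials in the $\bfg$- and $\bfh$-Hida families as the weights degenerate to $1$, together with nonvanishing of the relevant triangulation ratios --- it is in this step that the non-crystalline factor $\L_{g_\alpha}$ makes its appearance. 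Once this is in place, steps (ii) and (iii) reduce essentially to linear algebra on the four-dimensional relaxed Selmer group and to careful bookkeeping of Euler factors, test-vector normalisations and period comparisons.
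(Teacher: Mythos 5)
Your overall architecture --- the diagonal class $\kappa=\kappa(f,g_\alpha,h_\alpha)$, a reciprocity law identifying $\L_p^g(\bff,\bfg,\bfh)(2,1,1)$ with $\log_{\alpha\beta}(\pi_{\alpha\beta}\res_p\kappa)$ up to Euler factors and periods, and then linear algebra on the four-dimensional relaxed Selmer group --- is indeed the paper's strategy. But your step (ii) contains a genuine error. You claim that $\pi_{\alpha\beta}\res_p\xi^{\alpha\alpha}$, $\pi_{\alpha\beta}\res_p\xi^{\alpha\beta}$ and $\pi_{\alpha\beta}\res_p\xi^{\beta\alpha}$ all have nontrivial singular parts, so that only $\xi^{\beta\beta}$ survives in $\log_p(\pi_{\alpha\beta}\kappa)$. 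This is false for $\xi^{\alpha\alpha}$ and $\xi^{\beta\alpha}$: by the very characterisation of the basis, $\partial_p\xi^{\alpha\alpha}$ is concentrated in $\coh^1_s(\Q_p,V^{\alpha\alpha})$ and $\partial_p\xi^{\beta\alpha}$ in $\coh^1_s(\Q_p,V^{\beta\alpha})$, so their $\alpha\beta$-singular components vanish and $\pi_{\alpha\beta}\res_p\xi^{\alpha\alpha}$, $\pi_{\alpha\beta}\res_p\xi^{\beta\alpha}$ are \emph{crystalline} --- generically nonzero local points whose logarithms you have no grounds to discard. Your linear algebra therefore does not isolate $c_{\beta\beta}\log_p(R_{\beta\alpha})$.

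The missing ingredient, which is the heart of the paper's argument, is the \emph{other} half of the Darmon--Rotger reciprocity package (Proposition \ref{prop: relation with Lpf}(1)): $\partial_p\kappa$ lies in the image of $\coh^1_s(\Q_p,V^{\beta\beta})$ in $\coh^1_s(\Q_p,V)$, and $\exp^*_{\beta\beta}(\partial_p\kappa)$ equals $\L_p^f(\bff,\bfg,\bfh)(2,1,1)$ times explicit Euler factors. The first assertion forces $c_{\alpha\alpha}=c_{\alpha\beta}=c_{\beta\alpha}=0$, i.e.\ $\kappa$ is a scalar multiple of $\xi^{\beta\beta}$, which is what rescues your step (ii); the second computes that scalar as $\exp^*_{\beta\beta}(\partial_p\kappa)/\exp^*_{\beta\beta}(\partial_p\xi^{\beta\beta})$, and since $(2,1,1)$ lies in the interpolation region of the $f$-dominant $p$-adic $L$-function ($k\geq\ell+m$), the value $\L_p^f(2,1,1)$ is already $\sqrt{L(E\otimes\rho,1)}$ divided by $\pi\langle f,f\rangle$ up to algebraic factors. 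This renders your step (iii) --- Poitou--Tate pairing against a dual global class plus Ichino --- both unnecessary and, as sketched, too vague to actually pin down $c_{\beta\beta}$; Harris--Kudla/Ichino enters only through the interpolation formula defining $\L_p^f$. Finally, the quantities $\L_{g_\alpha}=\Omega_g/\Theta_g$ and $\Theta_g\Theta_h$ arise concretely from comparing $\log_{\alpha\beta}$ and $\exp^*_{\beta\beta}$ (normalised by pairing with $\eta_g\omega_h$ and $\omega_g\omega_h$, with $\langle\omega_g,\eta_g\rangle=\mathfrak{g}(\chi)$) against the naive $\log_{f,p}$ and $\exp^*_{f,p}$; these period factors cancel to leave exactly the $1/\L_{g_\alpha}$ in the statement, rather than emerging from a degeneration of overconvergent differentials as you suggest.
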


We refer to Theorem \ref{thm: main} for a more precise statement of the result and of the objects appearing in \eqref{eq: main thm intro}. In particular, the element $\L_{g_\alpha}$ is expected to be related to a so-called Gross--Stark unit attached to $g_\alpha$, as conjectured in \cite[Conjecture 2.1]{DR2b}.

Under the additional assumption that $g$ is not the theta series of a Hecke character of a real quadratic field in which $p$ splits, the value $\L_p^g(\bff, \bfg,\bfh)(2,1,1)$ can be recast in a more explicit way in terms of \textit{$p$-adic iterated integrals}, as explained in the introduction of \cite{DLR}. The numerical computations we offer in \S \ref{sec: Numerical computations} are obtained by calculating such integrals, where a key input are Lauder's algorithms \cite{Lauder} for the computation of overconvergent projections.

As an application of the main result, in Section \ref{sec: p split in K} we explore the situation where $g$ and $h$ are theta series of the same imaginary quadratic field in which $p$ splits. The following theorem is stated as Theorem \ref{thm: CM case} in the text.

\begin{theorem*}[cf Theorem~\ref{thm: CM case}]
Let $K$ be an imaginary quadratic field in which $p$ is split, and let $\psi_g$ (resp. $\psi_h$) be a finite order Hecke character of $K$ of conductor $\fc_g$ (resp. of conductor $\fc_h)$.  Denote by $g$ and $h$ the theta series attached to $\psi_g$ and $\psi_h$, respectively. Suppose that $\gcd(N_f,\fc_g,\fc_h)=1$ and that the Nebentype characters of $g$ and $h$ are inverses to each other. If $L(E,\rho_g\otimes\rho_h,1)\neq 0$ then $\L_p^g(\bff,\bfg,\bfh)(2,1,1)=0$.
\end{theorem*}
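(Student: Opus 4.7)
The plan is to apply the main theorem (Theorem~\ref{thm: main}) and show that the local point $R_{\beta\alpha}$ appearing in formula~\eqref{eq: main thm intro} is torsion, so that $\log_p(R_{\beta\alpha})=0$ and hence $\L_p^g(\bff,\bfg,\bfh)(2,1,1)=0$. To invoke Theorem~\ref{thm: main} one needs the full Assumption~\ref{ass: rank 00}; here the vanishing $\Sel_p(E\otimes\rho)=0$ should follow from $L(E\otimes\rho,1)\neq 0$ via the factorization $L(E\otimes\rho,s)=L(E/K,\psi_g\psi_h,s)\cdot L(E/K,\psi_g\psi_h^\sigma,s)$ and Kolyvagin-type results for $E$ twisted by finite-order Hecke characters of $K$.

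To exploit the CM structure, write $\sigma$ for the nontrivial element of $\Gal(K/\Q)$ and use the factorization $\rho_g\otimes\rho_h \cong \Ind_K^{\Q}(\psi_g\psi_h) \oplus \Ind_K^{\Q}(\psi_g\psi_h^\sigma)$ to obtain a $G_\Q$-decomposition $V = V_1 \oplus V_2$. Since $p=\fp\bar{\fp}$ splits in $K$, the Frobenius eigenvalues satisfy $\alpha_g=\psi_g(\fp)$, $\beta_g=\psi_g(\bar\fp)$ and similarly for $h$; tracking these through the tensor product identifies $V_1|_{G_{\Q_p}} = V^{\alpha\alpha}\oplus V^{\beta\beta}$ and $V_2|_{G_{\Q_p}} = V^{\alpha\beta}\oplus V^{\beta\alpha}$.

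Now decompose $\xi^{\beta\beta}=\xi_1+\xi_2$ with $\xi_i\in H^1(\Q,V_i)$ and focus on $\xi_2$. By construction $\partial_p\xi^{\beta\beta}$ is supported in $V^{\beta\beta}\subset V_1$, so the singular projection of $\xi_2$ at $p$ vanishes; combined with the Bloch--Kato conditions at primes $\ell\neq p$ (inherited from $\xi^{\beta\beta}$), this places $\xi_2$ in the strict Bloch--Kato Selmer group $\Sel(\Q,V_2)$. Shapiro's lemma identifies the latter with $\Sel(K,V_pE\otimes\psi_g\psi_h^\sigma)$, which is a direct summand of $\Sel_p(E\otimes\rho)=0$. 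Hence $\xi_2=0$; since the projection $\pi_{\alpha\beta}$ annihilates $V_1$, this forces $\pi_{\alpha\beta}\xi^{\beta\beta}=0$ and therefore $R_{\beta\alpha}=0$.

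The step I expect to be the main obstacle is the Euler-system input in the first paragraph: deducing $\Sel_p(E\otimes\rho)=0$ from $L(E\otimes\rho,1)\neq 0$ in the anticyclotomic CM setting requires invoking known results (à la Bertolini--Darmon, Howard, or Hsieh) that apply to the specific characters and conductors at hand, and verifying the hypotheses of those results. By contrast, the Shapiro-lemma step---compatibility with the Bloch--Kato local conditions at every place, and in particular with the crystalline condition at $p$, which exploits that $p$ splits in $K$---should be essentially bookkeeping, as should the Frobenius-eigenvalue identification of the four local summands.
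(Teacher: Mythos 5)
Your proposal is correct and follows essentially the same route as the paper: factor $V=V_1\oplus V_2$ via $\Ind_K^\Q(\psi_g\psi_h)\oplus\Ind_K^\Q(\psi_g\psi_h')$, invoke Gross--Zagier/Kolyvagin to kill $\Sel_p(\Q,V_i)$, use the split prime to identify $V_1$ with $V^{\alpha\alpha}\oplus V^{\beta\beta}$ and $V_2$ with $V^{\alpha\beta}\oplus V^{\beta\alpha}$, and conclude that the $V_2$-component of $\xi^{\beta\beta}$ vanishes (the paper phrases this by exhibiting $\xi^{\beta\beta}$ as a multiple of its basis vector $\zeta^{\beta\beta}$ of $\Sel_{(p)}(\Q,V_1)$, which is your $\xi_2=0$), so that $\pi_{\alpha\beta}\res_p\xi^{\beta\beta}=0$ and $R_{\beta\alpha}=0$. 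The only quibble is terminological: vanishing of the singular projection at $p$ puts $\xi_2$ in the Bloch--Kato Selmer group $\Sel_p(\Q,V_2)$, not the \emph{strict} one, but that is exactly the group you then correctly identify as zero.
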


\section{The Selmer group of $f\otimes g\otimes h$}\label{sec: Selmer groups}
We begin this section by collecting some standard facts on Selmer groups of $p$-adic Galois representations that we will use. Then we introduce the Galois representation attached to the triple of modular forms $f$, $g$, and $h$ of weights $2$, $1$, $1$, and we study the corresponding Selmer groups. In particular, the structure of the relaxed Selmer group will be key in proving the main theorem of Section \ref{sec: main thm}.

\subsection{Selmer groups}
Let $V$ be a $\Q_p[G_\Q]$-module and let $B_\mathrm{cris}$ be Fontaine's $p$-adic crystalline period ring.  For each prime number $\ell$, denote 
\begin{equation}\label{eq: selmer structure}
	\coh_f^1(\Q_\ell,V):=\begin{cases} \coh^1_\mathrm{ur}(\Q_\ell,V):=\coh^1(\Q_\ell^\mathrm{ur}/\Q_\ell,V^{I_\ell}) & \ell\neq p\\ \ker\Big(\coh^1(\Q_p,V)\rightarrow\coh^1(\Q_p,V\otimes_{\Q_p} B_\mathrm{cris})   \Big) & \ell=p,	\end{cases}
\end{equation}
and 
\begin{equation*}
	\coh^1_s(\Q_\ell,V):=\coh^1(\Q_\ell,V)/\coh^1_f(\Q_\ell,V).
\end{equation*}
The Bloch--Kato Selmer group of $V$ is $$\Sel_p(\Q,V):=\{x\in\coh^1(\Q,V)\mid \res_\ell(x)\in\coh_f^1(\Q_\ell,V) \text{ for all }\ell \},$$
where $\res_\ell\colon H^1(\Q,V)\ra H^1(\Q_\ell,V)$ denotes the restriction map in Galois cohomology.

For each prime $\ell$, we denote by $\partial_\ell$ the composition
\begin{equation*}
	\partial_\ell: \coh^1(\Q,V)\overset{\res_\ell}{\longrightarrow}\coh^1(\Q_\ell,V)\longrightarrow \coh^1_s(\Q_\ell,V),
\end{equation*}
where the second map is the natural quotient map.

The relaxed Selmer group is defined as $$\Sel_{(p)}(\Q,V):=\{ x\in\coh^1(\Q,V)\mid \res_\ell(x)\in\coh_f^1(\Q_\ell,V) \ \text{ for all }\   \ell\neq p \}\supseteq\Sel_p(\Q,V).$$

Let $V^*:=\Hom_{\Q_p}(V,\Q_p(1))$ be the 
Kummer dual of $V$. One can define a Selmer group $\Sel_{p,*}(\Q,V^*)$ for $V^*$ which is dual to (\ref{eq: selmer structure}) with respect to the local Tate pairings
\begin{align}\label{eq: tate pairing}
\langle \ , \ \rangle_\ell:\coh^1(\Q_\ell,V)\times\coh^1(\Q_\ell,V^*)\longrightarrow\Q_p.
\end{align}
For each $\ell$, define $\coh_{f,*}^1(\Q_\ell,V^*)$ to be the orthogonal complement of $\coh_{f}^1(\Q_\ell,V)$ with respect to \eqref{eq: tate pairing}; the Selmer group attached to $V^*$ is then 
\begin{align*}
\Sel_{p,*}(\Q,V^*):=\{x\in\coh^1(\Q,V^*)\mid \res_\ell(x)\in\coh_{f,*}^1(\Q_\ell,V^*) \ \text{ for all } \ell  \}.  
\end{align*}
Finally, the strict Selmer group of $V^*$ is the subspace of $\Sel_{p,*}(\Q,V^*)$ defined as
\begin{align*}
\Sel_{[p],*}(\Q,V^*):=\{ x\in \coh^1(\Q,V^*)\mid \res_\ell(x)\in \coh_{f,*}^1(\Q_\ell,V^*)  \text{ for all } \ell \text{ and } \res_p(x)=0 \}.  
\end{align*}

By Poitou--Tate duality (see, for example, \cite[Theorem 2.3.4]{MR04}) there is an exact sequence
\begin{equation}\label{eq: exact sequence of Selmer groups}
0\rightarrow \Sel_p(\Q,V)\rightarrow\Sel_{(p)}(\Q,V)\rightarrow\coh_s^1(\Q_p,V)\rightarrow\Sel_{p,*}(\Q,V^*)^\lor\rightarrow\Sel_{[p],*}(\Q,V^*)^\lor,
\end{equation}
where ${}^\lor$ stands for the $\Q_p$-dual.

\subsection{Representations attached to modular forms} In this section we review the main features of the representations, both $p$-adic and $\Lambda$-adic, attached to modular forms in the lines of \cite[\S2]{DR2b}, which the reader can consult for more details.

Let $f\in S_2(N_f)$ be a weight two normalized eigenform of level $N_f$, trivial nebentype character and rational Fourier coefficients $a_n(f)$. Denote by $E$ the elliptic curve over $\Q$ of conductor $N_f$ associated to $f$ by the Eichler--Shimura construction. 

Let also
\begin{equation*}
	g\in S_1(N_g,\chi) \ \text{ and } \ h\in S_1(N_h,\bar{\chi})
\end{equation*}
be two normalized newforms of weight one, levels $N_g$ and $N_h$, and nebentype characters $\chi$ and $ \bar{\chi}$ respectively. Denote by $K_g$ and $K_h$ their fields of Fourier coefficients, and put $L:=K_g\cdot K_h$ the compositum of these fields.  

From now on, we fix a rational prime $p$, and we assume the following hypothesis.
 \begin{assumption}\label{ass: p doesnt divide the levels}
 The prime $p$ does not divide $ N_f N_g N_h.$
 \end{assumption} 
Since we will be interested in putting $f$ in a Hida family, we assume moreover that $f$ is ordinary at $p$; that is to say, that $p\nmid a_p(f)$.
 
We denote the $2$-dimensional $p$-adic representations attached to $f$ by $V_f$. Since $f$ corresponds to the curve $E$, the representation $V_f$ is given by the rational Tate module  $V_p(E)=T_p(E)\otimes\Q_p$. Denote by $\alpha_f,\beta_f$ the roots of the Hecke polynomial $X^2-a_p(f)X + p$. Since $f$ is ordinary at $p$, one of these roots, say $\alpha_f$, is a $p$-adic unit; also, the restriction of $V_f$ to a decomposition group $G_{\Q_p}\subset G_\Q$ admits a filtration of $\Q_p[G_{\Q_p}]$-modules  
$$0\rightarrow V^+_f\longrightarrow V_f\longrightarrow V^-_f\rightarrow 0$$
with the following properties:
\begin{enumerate}
\item $\dim_{\Q_p}V^+_f= \dim_{\Q_p}V^-_f=1$;
\item the group $G_{\Q_p}$ acts on the quotient $V_f^-$ via $\psi_f$, where  $\psi_f:G_{\Q_p}\rightarrow\Z_p^\times$ is the unramified character that maps an arithmetic Frobenius $\Frob_p$ to $\alpha_f$.
\item the group $G_{\Q_p}$ acts on  $V_f^+$ via the character $\chi_{\cycl}\psi_f^{-1}$ (here $\chi_{\mathrm{cycl}}$ is the $p$-adic cyclotomic character).
\end{enumerate}

There are Artin representations associated to $g$ and $h$. Without loss of generality we can assume that they are defined over $L$, and that they factor through the same finite extension $H$ of $\Q$. That is to say, they are of the form
\begin{align*}
  \rho_g\colon \Gal(H/\Q)\lra \Aut(V_g^{\circ})\cong \GL_2(L),\ \ \   \rho_h\colon \Gal(H/\Q)\lra \Aut(V_h^{\circ})\cong \GL_2(L)
\end{align*}
for certain $2$-dimensional $L$-vector spaces $V_g^{\circ}$ and $V_h^{\circ}$.

Fix once and for all a prime $\fp$ of $H$ and a prime $\fP$ of $L$ above $p$. Denote the corresponding completions by $H_p:=H_\fp$ and $L_p:=L_\fP$. There are also $p$-adic Galois representations associated to $g$ and $h$, that we will denote by $V_g$ and $V_h$. There are non-canonical isomorphisms
\begin{align}\label{eq: def of Vg and Vh}
 j_g:V^{\circ}_g\otimes_L L_p\overset{\cong}{\longrightarrow} V_g \ \text{ and } \  j_h: V^{\circ}_h\otimes_L L_p\overset{\cong}{\longrightarrow} V_h.
\end{align}

Since $ p\nmid N_g N_h$ the representations $V_g$ and $V_h$ are unramified at $p$. We assume from now on that $\Frob_p$ acts on $V_g$ and $V_h$ with distinct eigenvalues. Let $\alpha_g$, $ \beta_g$ be the eigenvalues for the action of $\Frob_p$ on $V_g$ and let  $V_g^\alpha, V_g^\beta$ be the corresponding eigenspaces. We will use the analogous notations $\alpha_h$, $\beta_h$, $V_h^\alpha$, and $V_h^\beta$ for $h$.

Denote by $g_\alpha$ the $p$-stabilisation of $g$ such that $U_p(g_\alpha)=\alpha_gg_\alpha$. The theory of Hida families ensures the existence of a Hida family $\bfg$ passing through $g_\alpha$. This can be regarded as a power series $\bfg\in \Lambda_\bfg[[q]]$, where $\Lambda_\bfg$ is a finite flat extension of the Iwasawa algebra $\Lambda:=\Z_p[[T]]$, with the property that, if we denote by $y_g\colon \Lambda_\bfg\ra L_p$ the weight corresponding to $g$, then $y_g(\bfg)=g_\alpha$. There is a locally free $\Lambda_\bfg$-module $V_{\bfg}$ and a $\Lambda$-adic representation $\rho_\bfg:G_\Q\rightarrow\GL(V_\bfg)\cong \GL_2(\Lambda_\bfg)$ that interpolates the $p$-adic representations associated to the specializations of $\bfg$.  As a $G_{\Q_p}$-representation, $V_\bfg$ is equipped with a filtration of $\Lambda_\bfg[G_{\Q_p}]$-modules
\begin{equation}\label{eq: filtration of lambda-adic rep}
0\rightarrow V_\bfg^+\longrightarrow V_\bfg\longrightarrow V_\bfg^-\rightarrow0,
\end{equation}
where $V_\bfg^+$ and $V_\bfg^-$ are locally free of rank one and the action of $G_{\Q_p}$ on $V_\bfg^-$ is unramified, with $\Frob_p$ acting as multiplication by the $p$-th Fourier coefficient of $\bfg$. There is a perfect Galois equivariant pairing  
\begin{align}\label{eq: lambda-adic pairing}
\langle \ , \ \rangle:V_\bfg^-\times V_\bfg^+\longrightarrow\Lambda_\bfg(\det(\rho_\bfg)).
\end{align}
For a crystalline $\Q_p[G_{\Q_p}]$-module $W$, denote $D(W):=(W\otimes B_\mathrm{cris})^{G_{\Q_p}}$. Recall that, if $W$ is unramified, then 
\begin{equation*}
 D(W)\cong(W\otimes\hat{\Q}_p^{\mathrm{ur}})^{G_{\Q_p}},
\end{equation*}
where $\hat{\Q}_p^{\mathrm{ur}}$ is the $p$-adic completion of the maximal unramified extension of $\Q_p$.  Denote by $\omega_\bfg\in D(V_\bfg^-)$ the canonical period associated to $\bfg$ constructed by  Ohta \cite{Ohta}.

By specializing via $y_g$, we obtain the $L_p$-vector space $y_g(V_\bfg):=V_\bfg\otimes_{\Lambda_\bfg,y_g}L_p$, which can be identified with $V_g$. Using the functoriality of $D$ and the identification $y_g(V_\bfg^+)=V_g^\beta, \ y_g(V_\bfg^-)=V_g^\alpha$ we obtain a pairing 
\begin{equation}\label{eq: pairing Vg+}
\langle \ , \ \rangle:D(V_g^\alpha)\times D(V_g^\beta)\longrightarrow D(L_p(\chi))=(H_p\otimes L_p(\chi))^{G_{\Q_p}}.
\end{equation}

Define 
\begin{align*}
\omega_g:=y_g(\omega_\bfg)\in D(V_g^\alpha)
\end{align*}
and let 
\begin{align*}
\eta_g\in D(V_g^\beta)
\end{align*}
be the element characterized by the equality
\begin{equation}\label{eq: pairing between etas and omegas}
\langle\omega_g,\eta_g\rangle= \mathfrak{g}(\chi)\otimes 1\in D(L_p(\chi)),
\end{equation} 
where $\mathfrak{g}(\chi)$ denotes the Gauss sum of $\chi$ viewed as an element of $H_p$. 
We define similarly $\omega_h\in D(V_h^\alpha)$ and  $ \eta_h\in D(V_h^\beta)$.

Using the isomorphisms \eqref{eq: def of Vg and Vh} we can define an $L$ structure on $V_g$ by  $V_g^L:=j_g(V^{\circ}_g)$. Let $v_g^{\alpha}$ (resp. $v_g^{\beta}$) be an $L$-basis of $V_g^L\cap V_g^{\alpha}$ (resp. of $V_g^{\beta}$). Define  
\begin{align*}
\Omega_g\in H_p^{1/\alpha_g}, \ \Theta_g\in H_p^{1/\beta_g}  
\end{align*}
to be the elements such that 
 \begin{equation}\label{eq: definition of Omega and Theta}
 	\Omega_g\otimes v_g^{\alpha}=\omega_g\in D(V_g^\alpha), \ \ \Theta_g\otimes v_g^\beta=\eta_g\in D(V_g^\beta).
 \end{equation}
 
  Let
 \begin{align*}
   V:=V_{f}\otimes V_{gh}
 \end{align*}
be the $p$-adic representation given by the tensor product $V_f\otimes V_g\otimes V_h$. Since the product of the nebentype characters of $f$, $g$, and $h$ is trivial we have that  $V^*\cong V$. We next study the structure of several Selmer groups associated to $V$.

\subsection{Selmer groups of $V$}

Put $V_{gh}^{\circ}:=V_g^{\circ}\otimes_L V_h^{\circ}$ and denote by $\rho$ the representation afforded by this space:
\begin{align*}
  \rho\colon \Gal(H/\Q)\lra \Aut(V_{gh}^{\circ}).
\end{align*}
Put $E(H)_L:=E(H)\otimes_\Z L$ and denote by $E(H)^{\rho}$ the $\rho$-isotypical component of the Mordell--Weil group:
\begin{equation*}
E(H)^{\rho}:=\hom_{\Gal(H/\Q)}(V_{gh}^{\circ},E(H)_L).
\end{equation*}

\begin{lemma}\label{lem: first lemma on isomorphisms cohomology triple product}
	There are isomorphisms
	\begin{align}
	\coh^1(\Q,V)\cong(\coh^1(H,V_f)\otimes V_{gh})^{\Gal(H/\Q)}\cong\hom_{\Gal(H/\Q)}(V_{gh},\coh^1(H,V_f));\label{eq: iso cohomology triple product1}\\
	\coh^1(\Q_p,V)\cong(\coh^1(H_ p,V_f)\otimes V_{gh})^{\Gal(H_ p/\Q_p)}\cong\hom_{\Gal(H_ p/\Q_p)}(V_{gh},\coh^1(H_ p,V_f))\label{eq: iso cohomology triple product2}. 
	\end{align}
\end{lemma}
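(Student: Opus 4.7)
The plan is to apply inflation--restriction for the finite Galois extension $H/\Q$ and to exploit the fact that $G_H$ acts trivially on $V_{gh}$. Since $\rho_g\otimes\rho_h$ factors through $\Gal(H/\Q)$, picking an $L_p$-basis of $V_{gh}$ and using additivity of cohomology on finite direct sums yields a natural $\Gal(H/\Q)$-equivariant identification
\begin{equation*}
\coh^1(H, V) = \coh^1(H, V_f\otimes V_{gh}) \cong \coh^1(H, V_f)\otimes V_{gh},
\end{equation*}
where the $\Gal(H/\Q)$-action on the right is the diagonal one (the action on $\coh^1(H, V_f)$ factors through $\Gal(H/\Q)$ because inner automorphisms act trivially on cohomology). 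I would then feed this into the five-term inflation--restriction sequence
\begin{equation*}
0\to \coh^1(\Gal(H/\Q), V^{G_H})\to \coh^1(\Q, V)\to \coh^1(H, V)^{\Gal(H/\Q)}\to \coh^2(\Gal(H/\Q), V^{G_H}),
\end{equation*}
and observe that the outer terms vanish because the cohomology of the finite group $\Gal(H/\Q)$ with coefficients in any $L_p$-vector space is killed by $|\Gal(H/\Q)|$. This gives the first claimed isomorphism.

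For the $\hom$ reformulation I would invoke the general identity $(A\otimes W)^G\cong \hom_G(W^\ast, A)$ valid for any finite-dimensional $G$-representation $W$, combined with the self-duality of $V_{gh}$ recorded in Section~\ref{sec: Introduction}: from $V_g^\ast\cong V_g\otimes\chi^{-1}$ and $V_h^\ast\cong V_h\otimes\chi$ one obtains $V_{gh}^\ast\cong V_{gh}$, so $\hom_{\Gal(H/\Q)}(V_{gh}^\ast,\coh^1(H, V_f))$ is canonically identified with $\hom_{\Gal(H/\Q)}(V_{gh},\coh^1(H, V_f))$.

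The local statement then follows by running exactly the same argument with $\Gal(H_p/\Q_p)$ in place of $\Gal(H/\Q)$: because $H\subset H_p$, the subgroup $G_{H_p}$ acts trivially on $V_{gh}$, and $\Gal(H_p/\Q_p)$ is again a finite group acting on $L_p$-vector spaces, so the Hochschild--Serre outer terms vanish for the same reason and the self-duality step is unchanged. I do not expect a substantive obstacle here: the proof reduces to two standard inputs, namely the vanishing of finite-group cohomology in characteristic zero and the self-duality of the Artin representation $V_{gh}$, and no delicate integrality or convergence issue arises.
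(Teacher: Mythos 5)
Your argument is correct and follows essentially the same route as the paper's proof: the inflation--restriction sequence with vanishing outer terms (the paper asserts this vanishing, you supply the standard characteristic-zero justification), the identification $\coh^1(H,V_f\otimes V_{gh})\cong\coh^1(H,V_f)\otimes V_{gh}$, and the tensor--hom adjunction combined with the self-duality $V_{gh}^\lor\cong V_{gh}$. The only cosmetic difference is that the paper writes out the local case and declares the global case analogous, whereas you do the reverse.
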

\begin{proof}
We prove only \eqref{eq: iso cohomology triple product2}, and \eqref{eq: iso cohomology triple product1} is proven similarly. By the inflation-restriction  exact sequence we have the exact sequence
	\begin{equation*}
0\rightarrow \coh^1(\Gal(H_ p/\Q_p), V^{G_{H_ p}})\rightarrow \coh^1(\Q_p,V)\rightarrow \coh^1(H_ p, V)^{\Gal(H_ p/\Q_p)}\rightarrow\coh^2(\Gal(H_ p/\Q_p), V^{G_{H_ p}}).
	\end{equation*}
Since $\coh^1(\Gal(H_ p/\Q_p), V^{G_{H_ p}})=\coh^2(\Gal(H_ p/\Q_p), V^{G_{H_ p}})=0$, the restriction to $G_{H_ p}$ gives an isomorphism 	
\begin{equation*}
	\coh^1(\Q_p,V)\longrightarrow \coh^1(H_ p, V)^{\Gal(H_ p/\Q_p)}.
\end{equation*}
Composing it with the identifications
\begin{equation*}
	\coh^1(H_ p, V)^{\Gal(H_ p/\Q_p)}=\coh^1(H_ p, V_f\otimes V_{gh})^{\Gal(H_ p/\Q_p)}=(\coh^1(H_ p, V_f)\otimes V_{gh})^{\Gal(H_ p/\Q_p)},
\end{equation*}
we get the first isomorphism of \eqref{eq: iso cohomology triple product2}.
Finally, the second isomorphism follows from the relation between $\hom$ and tensor and from the selfduality $V_{gh}^\lor\cong V_{gh}$.
\end{proof}

Let $E(H)_L^{V_{gh}}:=\hom_{\Gal(H/\Q)}(V_{gh},E(H)_L)$. The Kummer  homomorphism 
\begin{align*}
	 E(H)_L\longrightarrow\coh^1(H,V_f)
\end{align*} 
induces a homomorphism
\begin{align*}
	\delta: E(H)_L^{V_{gh}}\longrightarrow\hom_{\Gal(H/\Q)}(V_{gh},\coh^1(H,V_f))\cong\coh^1(\Q,V),
\end{align*}
which using \eqref{eq: iso cohomology triple product1} can be seen as an isomorphism 
\begin{align*}
	\delta: E(H)_L^{V_{gh}}\longrightarrow \coh^1(\Q,V).
\end{align*}

For $\triangle, \heartsuit\in\{ \alpha,\beta \}$, denote $$V_{gh}^{\triangle\heartsuit}:=V_g^{\triangle} \otimes V_h^\heartsuit \ \text{ and } \ V^{\triangle\heartsuit}:=V_f\otimes V_g^{\triangle} \otimes V_h^\heartsuit.$$ 
Specializing \eqref{eq: filtration of lambda-adic rep} via $y_g$ we obtain 
\begin{align*}
  0\lra V_g^\beta \lra V_g\lra V_g^\alpha\lra 0.
\end{align*}
For $\triangle\neq\heartsuit$ the pairing \eqref{eq: lambda-adic pairing} and its analog for $h$ induce perfect pairings
 \begin{align}
 	\langle \ , \ \rangle : V_{gh}^{\triangle\triangle} \times V_{gh}^{\heartsuit\heartsuit}\longrightarrow L_p, \qquad 	\langle \ , \ \rangle : V_g^{\triangle\heartsuit} \times V_g^{\heartsuit\triangle}\longrightarrow L_p.\label{eq: pairing and eigenspaces}
 \end{align}
 The identifications
 \begin{align}\label{eq: identifications}
V_{gh}^{\triangle \triangle }\cong \Hom_{L_p[G_{\Q_p}]}(V_{gh}^{\heartsuit\heartsuit},L_p) \ \   \text{ and } \ \ V_{gh}^{\triangle \heartsuit }\cong \Hom_{L_p[G_{\Q_p}]}(V_{gh}^{\heartsuit\triangle},L_p),
 \end{align}
  together with \eqref{eq: iso cohomology triple product2} give the following isomorphisms:

 	\begin{align}
 		\coh^1(\Q_p,V^{\triangle\triangle})\cong(\coh^1(H_p,V_f)\otimes V_{gh}^{\triangle\triangle})^{\Gal(H_p/\Q_p)}\cong\hom_{\Gal(H_p/\Q_p)}(V_{gh}^{\heartsuit\heartsuit},\coh^1(H_p,V_f));\\
 		\coh^1(\Q_p,V^{\triangle\heartsuit})\cong(\coh^1(H_p,V_f)\otimes V_{gh}^{\triangle\heartsuit})^{\Gal(H_p/\Q_p)}\cong\hom_{\Gal(H_p/\Q_p)}(V_{gh}^{\heartsuit\triangle},\coh^1(H_p,V_f)). 		
 	\end{align}

It follows from \cite[Lemma 4.1]{DR3} that the submodule $H^1_f(\Q_p,V_f\otimes V_{gh}^{\triangle\heartsuit})$ and the singular quotient $H^1_s(\Q_p,V_f\otimes V_{gh}^{\triangle\heartsuit})$ can be written in terms of the filtration of $V_f$ as follows:
\begin{align}\label{eq: H1s} 
  \coh^1_s(\Q_p,V_f\otimes V_{gh}^{\triangle\heartsuit})=\coh^1(\Q_p,V_f^-\otimes V_{gh}^{\triangle\heartsuit})\cong(V_{gh}^{\heartsuit\triangle}\otimes\coh^1_s(H_p,V_f))^{\Gal(H_p/\Q_p)};  
\end{align}
\begin{align}\label{eq: H1f}
  \coh^1_f(\Q_p,V_f\otimes V_{gh}^{\triangle\heartsuit}) = \ker(\coh^1(\Q_p,V_f\otimes V_{gh}^{\triangle\heartsuit})\rightarrow\coh^1(I_p,V_f^-\otimes V_{gh}^{\triangle\heartsuit})) = \coh^1(\Q_p,V_f^+\otimes V_{gh}^{\triangle\heartsuit}).
\end{align}

\begin{lemma}\label{lem: kummer maps eigenspaces}
 For $\triangle,\heartsuit\in\{ \alpha,\beta \}$, $\triangle\neq\heartsuit$, there are isomorphisms
	\begin{align*}
& \delta_p: E(H_p)_{L_p}^{V_{gh}}\longrightarrow\coh^1_f(\Q_p,V);\\
& \delta_p^{\triangle\heartsuit}: E(H_p)_{L_p}^{V_{gh}^{\triangle\heartsuit}}\longrightarrow\coh^1_f(\Q_p,V^{\heartsuit\triangle});\\
& \delta_p^{\triangle\triangle}: E(H_p)_{L_p}^{V_{gh}^{\triangle\triangle}}\longrightarrow\coh^1_f(\Q_p,V^{\heartsuit\heartsuit}).
	\end{align*}

\end{lemma}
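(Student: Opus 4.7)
The plan is to reduce all three isomorphisms to the single input that, since $E$ has good reduction over $H_p$ (which is unramified over $\Q_p$), the Kummer map provides an isomorphism
\[
\kappa \colon E(H_p)\otimes_\Z\Q_p \xrightarrow{\;\sim\;} \coh^1_f(H_p,V_f).
\]
This identifies $E(H_p)_{L_p}$ with $\coh^1_f(H_p,V_f)\otimes_{\Q_p}L_p$ and, because $V_f$ is ordinary at $p$ with unramified quotient $V_f^-$, one moreover has $\coh^1_f(H_p,V_f)=\coh^1(H_p,V_f^+)$ (the image of the inflation from $V_f^+$), which is exactly what the formula \eqref{eq: H1f} records after crossing with the Artin piece.

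For the first map $\delta_p$, I would tensor the isomorphism above with $V_{gh}$ over $L_p$ and take $\Gal(H_p/\Q_p)$-invariants. On the cohomology side this yields
\[
\bigl(\coh^1(H_p,V_f^+)\otimes V_{gh}\bigr)^{\Gal(H_p/\Q_p)} \;\cong\; \coh^1(\Q_p,V_f^+\otimes V_{gh}) \;=\; \coh^1_f(\Q_p,V),
\]
where the first equality is the analogue of Lemma~\ref{lem: first lemma on isomorphisms cohomology triple product} applied to the unramified module $V_f^+\otimes V_{gh}$, and the second is \eqref{eq: H1f}. On the Mordell--Weil side, the self-duality of $V_{gh}$ (the tensor of the pairings in \eqref{eq: pairing and eigenspaces}) identifies
\[
\bigl(E(H_p)_{L_p}\otimes V_{gh}\bigr)^{\Gal(H_p/\Q_p)} \;\cong\; \hom_{\Gal(H_p/\Q_p)}\bigl(V_{gh},E(H_p)_{L_p}\bigr) \;=\; E(H_p)_{L_p}^{V_{gh}}.
\]

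For $\delta_p^{\triangle\heartsuit}$ with $\triangle\neq\heartsuit$, I would repeat the argument with $V_{gh}^{\heartsuit\triangle}$ in place of $V_{gh}$. The target becomes
\[
\bigl(\coh^1(H_p,V_f^+)\otimes V_{gh}^{\heartsuit\triangle}\bigr)^{\Gal(H_p/\Q_p)} \;\cong\; \coh^1(\Q_p,V_f^+\otimes V_{gh}^{\heartsuit\triangle}) \;=\; \coh^1_f(\Q_p,V^{\heartsuit\triangle}),
\]
using \eqref{eq: H1f} again. On the other side, the pairing \eqref{eq: pairing and eigenspaces} gives a $G_{\Q_p}$-equivariant identification $V_{gh}^{\heartsuit\triangle}\cong \Hom_{L_p}(V_{gh}^{\triangle\heartsuit},L_p)$, hence
\[
\bigl(E(H_p)_{L_p}\otimes V_{gh}^{\heartsuit\triangle}\bigr)^{\Gal(H_p/\Q_p)} \;\cong\; \hom_{\Gal(H_p/\Q_p)}\bigl(V_{gh}^{\triangle\heartsuit},E(H_p)_{L_p}\bigr) \;=\; E(H_p)_{L_p}^{V_{gh}^{\triangle\heartsuit}},
\]
which explains the swap of eigenspace labels between domain and target. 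The case of $\delta_p^{\triangle\triangle}$ is entirely analogous, using the first pairing in \eqref{eq: pairing and eigenspaces} which pairs $V_{gh}^{\triangle\triangle}$ with $V_{gh}^{\heartsuit\heartsuit}$.

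The only real subtlety is to ensure that the Kummer classes land in the crystalline subspace after tensoring with the Artin piece. This is exactly the content of \eqref{eq: H1f}: because $V_{gh}$ (and each of its eigenspace summands) is unramified at $p$, the local Bloch--Kato condition on $V_f\otimes V_{gh}^{?}$ is controlled by the ordinary filtration on $V_f$ alone, and coincides with $\coh^1(\Q_p,V_f^+\otimes V_{gh}^{?})$. Once this is granted, the three stated isomorphisms are formal consequences of Kummer theory and the self-duality of $V_{gh}$.
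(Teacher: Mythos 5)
Your proposal is correct and follows essentially the same route as the paper: the Kummer map identifies $E(H_p)_{L_p}$ with $\coh^1_f(H_p,V_f)\cong\coh^1(H_p,V_f^+)$, the self-duality pairings \eqref{eq: identifications} account for the swap of eigenspace labels, descent from $H_p$ to $\Q_p$ is the inflation--restriction argument of Lemma~\ref{lem: first lemma on isomorphisms cohomology triple product}, and \eqref{eq: H1f} identifies the result with the Bloch--Kato subspace. The only cosmetic difference is the order in which you apply duality and the Kummer map, which does not affect the argument.
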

\begin{proof}
We prove the existence of the isomorphism $\delta^{\triangle\heartsuit}$, the others are similar. By Kummer theory, there is an injective morphism 
	\begin{equation*}
E(H_p)_{L_p}\longrightarrow \coh^1(H_p,V_f),
	\end{equation*}
	which is an isomorphism on its image $\coh^1_f(H_p,V_f)\cong\coh^1(H_p,V_f^+)$.
	It induces an isomorphism 
	\begin{equation*}
\delta_p^{\triangle\heartsuit}: E(H_p)_{L_p}^{V_{gh}^{\triangle\heartsuit}}\longrightarrow \hom_{\Gal(H_p/\Q_p)}(V_{gh}^{\triangle\heartsuit},\coh^1(H_p,V_f^+)).
\end{equation*}	
Using the isomorphisms \eqref{eq: identifications} we obtain
\begin{equation*}
\hom_{\Gal(H_p/\Q_p)}(V_{gh}^{\triangle\heartsuit},\coh^1(H_p,V_f^+))\overset{\cong}{\longrightarrow}(\coh^1(H_p,V_f^+)\otimes V_{gh}^{\heartsuit\triangle})^{\Gal(H_p/\Q_p)}. 
\end{equation*}
Arguing as in the proof of Lemma \ref{lem: first lemma on isomorphisms cohomology triple product}, we get the isomorphisms  
\begin{align*}
	(\coh^1(H_p,V_f^+)\otimes V_{gh}^{\heartsuit\triangle})^{\Gal(H_p/\Q_p)}& \cong 	\coh^1(H_p,V_f^+\otimes V_{gh}^{\heartsuit\triangle})^{\Gal(H_p/\Q_p)}\\
	& \cong \coh^1(\Q_p,V_f^+\otimes V_{gh}^{\heartsuit\triangle})\cong\coh^1_f(\Q_p,V^{\heartsuit\triangle}).
\end{align*}
\end{proof}

From now on we will make the following assumption on the Selmer group of $V$.
\begin{assumption}\label{ass: rank 0}
	$\Sel_p(\Q,V)=0$.
\end{assumption}
Under this assumption one can identify the relaxed Selmer group with the singular quotient.
\begin{lemma}
 Under Assumptions \ref{ass: rank 0} and \ref{ass: p doesnt divide the levels} the natural map
	\begin{equation*}
		\partial_p: \Sel_{(p)}(\Q,V){\longrightarrow} \coh^1_s(\Q_p,V)
	\end{equation*}
is an isomorphism. In particular, there is an isomorphism
\begin{align}\label{eq: relaxed selmer group as singular quotient}
  \Sel_{(p)}(\Q,V)\cong \coh^1_s(\Q_p,V^{\alpha\alpha})\oplus \coh^1_s(\Q_p,V^{\alpha\beta})\oplus \coh^1_s(\Q_p,V^{\beta\alpha})\oplus \coh^1_s(\Q_p,V^{\beta\beta})
\end{align}
\end{lemma}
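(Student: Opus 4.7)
The strategy is to feed Assumption \ref{ass: rank 0} into the Poitou--Tate exact sequence \eqref{eq: exact sequence of Selmer groups} and use the self-duality of $V$. Since $V^*\cong V$ (because the product of the nebentypes of $f$, $g$, $h$ is trivial, as noted right before the subsection), one checks that the local conditions at every prime also match under this isomorphism: at $\ell\neq p$, both $\coh^1_f$ and $\coh^1_{f,*}$ are the unramified classes, and at $\ell=p$, the Bloch--Kato subspace and its orthogonal complement coincide under the self-duality since $V$ is de Rham with a self-dual filtration inherited from the filtrations on $V_f$, $V_g$, $V_h$. Hence the isomorphism $V^*\cong V$ identifies $\Sel_{p,*}(\Q,V^*)$ with $\Sel_p(\Q,V)$, which vanishes by Assumption \ref{ass: rank 0}.

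With this input, the five-term sequence \eqref{eq: exact sequence of Selmer groups} reads
\begin{equation*}
0\rightarrow 0\rightarrow\Sel_{(p)}(\Q,V)\overset{\partial_p}{\longrightarrow}\coh_s^1(\Q_p,V)\rightarrow 0\rightarrow\Sel_{[p],*}(\Q,V^*)^\lor,
\end{equation*}
so $\partial_p$ is simultaneously injective (its kernel is $\Sel_p(\Q,V)=0$) and surjective (its cokernel injects into $\Sel_{p,*}(\Q,V^*)^\lor=0$). This gives the first claim of the lemma.

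For the direct sum decomposition, recall that since $p\nmid N_gN_h$ by Assumption \ref{ass: p doesnt divide the levels}, the representations $V_g|_{G_{\Q_p}}$ and $V_h|_{G_{\Q_p}}$ split as $V_g=V_g^\alpha\oplus V_g^\beta$ and $V_h=V_h^\alpha\oplus V_h^\beta$ (using $\alpha_g\neq\beta_g$ and $\alpha_h\neq\beta_h$). Tensoring with $V_f$ gives a $G_{\Q_p}$-equivariant decomposition $V=V^{\alpha\alpha}\oplus V^{\alpha\beta}\oplus V^{\beta\alpha}\oplus V^{\beta\beta}$, which is inherited by $\coh^1_s(\Q_p,V)$ summand by summand, producing \eqref{eq: relaxed selmer group as singular quotient} via the isomorphism $\partial_p$ just established.

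The only non-formal step is the verification that the self-duality $V\cong V^*$ respects the Bloch--Kato local condition at $p$; this is the point that would need a brief justification in the write-up, but it follows from standard properties of $\coh^1_f$ for tensor products of de Rham representations whose Hodge--Tate weights are arranged symmetrically around $1/2$, which is precisely our setting.
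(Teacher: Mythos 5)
Your proof is correct and follows the same route as the paper: both use the self-duality $V^*\cong V$ to identify $\Sel_{p,*}(\Q,V^*)$ with $\Sel_p(\Q,V)=0$ and then read off injectivity and surjectivity of $\partial_p$ from the Poitou--Tate sequence \eqref{eq: exact sequence of Selmer groups}. The only difference is that you sketch the compatibility of the local conditions under self-duality, a point the paper delegates to the cited references (\cite{BK90} and \cite[Theorem 2.1]{Belaichenotes}).
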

\begin{proof}
	Since the representation $V$ is self-dual there is an isomorphism $\Sel_{p,*}(V^*)\cong\Sel_{p}(V),$ (see, for example, \cite{BK90} and \cite[Theorem 2.1]{Belaichenotes}). Then the lemma follows immediately from the exact sequence (\ref{eq: exact sequence of Selmer groups}).
\end{proof}
In the next subsection we will describe the spaces in the right hand side of \eqref{eq: relaxed selmer group as singular quotient} in terms of dual exponential maps.

\subsection{Bloch--Kato logarithms and exponentials} The  $G_{\Q_p}$-representations of the form \mbox{$V_f^+\otimes V_{gh}^{\triangle\heartsuit}$} are one dimensional and, therefore, given by characters. Indeed, $G_{\Q_p}$ acts on $V_f^{+}$ as $\chi_{\mathrm{cycl}}\psi_f^{-1}$,  and it acts as $\psi_g$ (resp. $\psi_g^{-1}$) on $V_g^\alpha$ (resp. $V_g^{\beta}$) and as $\psi_h$ (resp. $\psi_h^{-1}$) on $V_h^\alpha$ (resp. $V_h^{\beta}$). Therefore we have that
 \begin{align*}
		V_f^+\otimes V_{gh}^{\alpha\alpha}=L_p(\chi_{\cycl}\psi_f^{-1}\psi_g\psi_h), \ \qquad  \ V_f^+\otimes V_{gh}^{\alpha\beta}=L_p(\chi_{\cycl}\psi_f^{-1}\psi_g\psi_h^{-1}), \\  V_f^+\otimes V_{gh}^{\beta\alpha}=L_p(\chi_{\cycl}\psi_f^{-1}\psi_g^{-1}\psi_h),\ \qquad \ V_f^+\otimes V_{gh}^{\beta\beta}=L_p(\chi_{\cycl}\psi_f^{-1}\psi_g^{-1}\psi_h^{-1}).
	\end{align*}
In particular $V_f^+\otimes V_{gh}^{\triangle\heartsuit}$ is of the form $L_p(\psi\chi_{\cycl})$ for some nontrivial unramified character $\psi$. By \eqref{eq: H1f} we have that $\coh^1_f(\Q_p,V^{\triangle\heartsuit})\cong \coh^1(\Q_p,V_f^+\otimes V_{gh}^{\triangle\heartsuit})$, and the Bloch--Kato logarithm gives an isomorphism (cf. \cite[Example 1.6 (a)]{DR3}):
\begin{equation}\label{eq: BK}
	\log_{\triangle\heartsuit}:\coh^1_f(\Q_p,V^{\triangle\heartsuit})\longrightarrow D(V_f^+\otimes V_{gh}^{\triangle\heartsuit})=D(L_p(\psi\chi_{\cycl})).
      \end{equation}
       For $(\triangle,\heartsuit)=(\alpha,\alpha)$, the pairings \ref{eq: pairing Vg+} and the analogous pairings for $f$ and $h$ give rise to a pairing
\begin{equation}
	\langle \ , \ \rangle:V_f^+\otimes V_g^\alpha\otimes V_h^\alpha \times V_f^-(-1)\otimes V_g^\beta\otimes V_h^\beta \longrightarrow L_p
\end{equation}
which induces 
\begin{equation}\label{eq: pairing D(V)}
\langle \ , \ \rangle:D(V_f^+\otimes V_g^\alpha\otimes V_h^\alpha) \times D(V_f^-(-1)\otimes V_g^\beta\otimes V_h^\beta )\longrightarrow D(L_p)= L_p.
\end{equation}
Denote by $\tilde\omega_f$ the differential form on $X_0(N_f)$ corresponding to $f$. It can be naturally viewed as an element of the de Rham cohomology group $H^1_{\mathrm{dR}}(X_0(N_f)/\Q_p)$. The comparison isomorphisms of $p$-adic Hodge theory provide a natural map
\begin{align*}
 H^1_{\mathrm{dR}}(X_0(N_f)/\Q_p)(1)\lra D(V_f^-) 
\end{align*}
and therefore $\tilde\omega_f$ gives rise to an element $\omega_f\in D(V_f^-(-1))$. In \eqref{eq: pairing D(V)}, pairing with the class $\omega_f\otimes\eta_g\otimes\eta_h$ gives then an isomorphism
\begin{equation}\label{eq: pair with class}
	\langle\cdot, \omega_f\otimes\eta_g\otimes\eta_h\rangle:D(L_p(\psi\chi_{\cycl}))=D(V_f^+\otimes V_g^\alpha\otimes V_h^\alpha)\longrightarrow  L_p.
\end{equation}
There are similar pairings and isomorphisms for the remaining pairs $(\triangle, \heartsuit)$.
We still denote
\begin{equation}\label{eq: block-kato logarithms on V}
\log_{\triangle\heartsuit}:\coh^1_f(\Q_p,V_f\otimes V_{gh}^{\triangle\heartsuit})\longrightarrow  L_p
\end{equation}
the map obtained by composing \eqref{eq: BK} with \eqref{eq: pair with class}.

\begin{remark}\label{rem: logaritmos} The logarithm maps of \eqref{eq: block-kato logarithms on V} are related to the usual $p$-adic logarithm on $E$ as follows. The differential $\omega_f$ gives rise to an invariant differential on $E$, and we denote by 
  \begin{align*}
\log_{f,p}\colon E(H_p)\longrightarrow H_p    
  \end{align*}
the corresponding formal group logarithm on $E$.  The map $\log_{\alpha\beta}$ coincides with the inverse of the isomorphism of Lemma \ref{lem: kummer maps eigenspaces}
\begin{equation*}
E(H_p)_{L_p}^{V_{gh}^{\beta\alpha}}\cong( E(H_p)^{\beta_g\alpha_h}\otimes V_{gh}^{\alpha\beta})^{G_{\Q_p}}
\end{equation*} 
composed with the maps
\begin{align*}
  \begin{array}{ccccc}
	(E(H_p)^{\beta_g\alpha_h}\otimes V_{gh}^{\alpha\beta})^{G_{\Q_p}} & \longrightarrow & (H_p^{\beta_g\alpha_h}\otimes V^{\alpha\beta}_{gh})^{G_{\Q_p}}= D(V^{\alpha\beta}_{gh}) & \longrightarrow & L_p\\
x\otimes v_g^\alpha v_h^\beta & \longmapsto &\log_{f,p}(x)\otimes v_g^\alpha v_h^\beta & & \\
 & & y & \longmapsto & \langle y, \eta_g\omega_h\rangle.
 \end{array}
\end{align*}
Analogous equalities hold for the other maps $\log_{\triangle\heartsuit}$.
\end{remark}

A similar discussion can be applied to the representations of the form $V_f^{-}\otimes V_{gh}^{\triangle\heartsuit}$. In this case we have the following isomorphisms of $1$-dimensional representations:
 \begin{align*}
		V_f^-\otimes V_{gh}^{\alpha\alpha}=L_p(\psi_f\psi_g\psi_h), \ \qquad \ V_f^-\otimes V_{gh}^{\alpha\beta}=L_p(\psi_f\psi_g\psi_h^{-1}\bar\chi), \\ V_f^-\otimes V_{gh}^{\beta\alpha}=L_p(\psi_f\psi_g^{-1}\psi_h\chi),\ \qquad \ V_f^-\otimes V_{gh}^{\beta\beta}=L_p(\psi_f\psi_g^{-1}\psi_h^{-1}).
	\end{align*}
Therefore,  $V_f^-\otimes V_{gh}^{\triangle\heartsuit}$ is isomorphic to a representation of the form $ L_p(\psi)$ for some unramified and nontrivial character  $\psi$. By \eqref{eq: H1s}  there is an identification
\begin{align*}
 \coh^1_s(\Q_p,V^{\triangle\heartsuit})=\coh^1(\Q_p,L_p(\psi)), 
\end{align*}
and by \cite[Example 1.8 (b)]{DR2}  the dual exponential gives isomorphisms
\begin{equation}\label{eq: dual exponentials for V}
\exp^*_{\triangle\heartsuit}:\coh^1_s(\Q_p,V^{\triangle\heartsuit})\longrightarrow D(L_p(\psi))\cong  L_p,
\end{equation}
where the last isomorphism is induced by pairing with the appropriate  class of $D(L_p(\psi^{-1}))=D(V_f^+(-1)\otimes V_g^{\heartsuit}\otimes V_h^\triangle)$ similarly as in \eqref{eq: pair with class}.
Arguing as in Remark \ref{rem: logaritmos}, let 
$$\exp_{f,p}^*:\coh^1_s(H_p,V_f)\longrightarrow H_p$$ denote the dual exponential on $\coh^1_s(H_p,V_f)$.  Then $\exp^*_{\beta\beta}$ can be identified with the composition 
\begin{align}\label{eq: dualexp with pairing}
  \begin{array}{ccccc}
(\coh^1_s(H_p,V_f)^{\alpha_g\alpha_h}\otimes V_{gh}^{\beta\beta})^{G_{\Q_p}} & {\longrightarrow} & (H_p^{\alpha_g\alpha_h}\otimes V^{\beta\beta}_{gh})^{G_{\Q_p}}= D(V^{\beta\beta}_{gh}) & \longrightarrow & L_p\\
x\otimes v_g^\beta v_h^\beta & \longmapsto & \exp_{f,p}^*(x)\otimes v_g^\beta v_h^\beta & & \\
& &  y &\longmapsto & \langle y, \omega_g\omega_h\rangle,
  \end{array}
\end{align}
after taking into account the identification
\begin{align*}
      \coh^1_s(\Q_p,V^{\beta\beta})\cong(\coh^1_s(H_p,V_f)^{\alpha_g\alpha_h}\otimes V_{gh}^{\beta\beta})^{G_{\Q_p}}.
\end{align*}
Analogous formulas hold for the dual exponentials $\exp_{\triangle\heartsuit}$ on the remaining components.





To sum up the discussion of this subsection, we conclude that the relaxed Selmer group of $V$ admits a basis adapted to decomposition \eqref{eq: relaxed selmer group as singular quotient} with respect to the dual exponential maps.
\begin{proposition}
  Under Assumptions \ref{ass: rank 0} and \ref{ass: p doesnt divide the levels}, $\Sel_{(p)}(V)$ has a basis
  \begin{align}\label{eq: basis xi} 
    \{ \xi^{\alpha\alpha}, \xi^{\alpha\beta}, \xi^{\beta\alpha}, \xi^{\beta\beta} \}
  \end{align}
  characterized by the fact that there exist elements $\Psi_{\beta\beta}\in\coh^1_s(H_p,V_f)^{\beta_g\beta_h}, \ \Psi_{\beta\alpha}\in\coh^1_s(H_p,V_f)^{\beta_g\alpha_h}, \ \Psi_{\alpha\beta}\in\coh^1_s(H_p,V_f)^{\alpha_g\beta_h}, \ \Psi_{\alpha\alpha}\in\coh^1_s(H_p,V_f)^{\alpha_g\alpha_h}$ such that  $$\partial_p{\xi}^{\alpha\alpha}=(\Psi_{\beta\beta}\otimes v_g^\alpha v_h^\alpha,0,0,0), \ \partial_p{\xi}^{\alpha\beta}=(0,\Psi_{\beta\alpha}\otimes v_g^\alpha v_h^\beta,0,0)$$
	$$\partial_p{\xi}^{\beta\alpha}=(0,0,\Psi_{\alpha\beta}\otimes v_g^\beta v_h^\alpha,0), \ \partial_p{\xi}^{\beta\beta}=(0,0,0,\Psi_{\alpha\alpha}\otimes v_g^\beta v_h^\beta)$$
	and $$\exp_{f,p}^*(\Psi_{\beta\beta})=\exp_{f,p}^*(\Psi_{\beta\alpha})=\exp_{f,p}^*(\Psi_{\alpha\beta})=\exp_{f,p}^*(\Psi_{\alpha\alpha})=1.$$
\end{proposition}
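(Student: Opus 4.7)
The plan is to assemble the proposition directly from the ingredients already in place. By the preceding lemma, $\partial_p\colon \Sel_{(p)}(\Q,V)\overset{\cong}{\to}\coh^1_s(\Q_p,V)$ is an isomorphism; by \eqref{eq: decomp H1s} the target splits as $\bigoplus_{\triangle,\heartsuit}\coh^1_s(\Q_p,V^{\triangle\heartsuit})$, and by \eqref{eq: dual exponentials for V} each summand is one-dimensional over $L_p$. It therefore suffices to exhibit, inside each one-dimensional summand, the vector specified by the eigenvalue-and-normalization conditions of the proposition, and then define $\xi^{\triangle\heartsuit}$ as its preimage under $\partial_p$.

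Using the identification \eqref{eq: H1s}, I would write
$\coh^1_s(\Q_p,V^{\triangle\heartsuit})\cong(V_{gh}^{\heartsuit\triangle}\otimes\coh^1_s(H_p,V_f))^{\Gal(H_p/\Q_p)}$,
or equivalently, via the self-duality of $V_{gh}$, with $V_{gh}^{\triangle\heartsuit}$ in place of $V_{gh}^{\heartsuit\triangle}$. A $G_{\Q_p}$-invariant element then takes the form $\Psi\otimes v_g^{\triangle}v_h^{\heartsuit}$, and Galois invariance forces $\Psi$ to lie in the $\Frob_p$-eigenspace of $\coh^1_s(H_p,V_f)$ whose eigenvalue inverts the $\Frob_p$-eigenvalue on $V_{gh}^{\triangle\heartsuit}$. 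Since the nebentypes of $g$ and $h$ are inverses, the identities $\alpha_g\beta_g=\chi(p)$, $\alpha_h\beta_h=\bar\chi(p)$, and $\chi\bar\chi=1$ combine to give $\alpha_g\beta_g\alpha_h\beta_h=1$, so each inverse rewrites as the complementary product, e.g.\ $(\alpha_g\alpha_h)^{-1}=\beta_g\beta_h$. This matches exactly the eigenspace labels $\Psi_{\beta\beta},\Psi_{\beta\alpha},\Psi_{\alpha\beta},\Psi_{\alpha\alpha}$ attached to $\xi^{\alpha\alpha},\xi^{\alpha\beta},\xi^{\beta\alpha},\xi^{\beta\beta}$ in the statement.

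It remains to translate the normalization. The explicit formula \eqref{eq: dualexp with pairing} realizes $\exp^*_{\beta\beta}$ as $\exp^*_{f,p}$ applied to the $\coh^1_s(H_p,V_f)$-factor, followed by a pairing against $\omega_g\omega_h$; the analogous factorization, with $\omega_g\omega_h$ replaced by $\eta_g\omega_h$, $\omega_g\eta_h$, or $\eta_g\eta_h$ as appropriate, holds for the other three components. Because each $\exp^*_{\triangle\heartsuit}$ is an isomorphism onto $L_p$, the restriction of $\exp^*_{f,p}$ to each of the relevant one-dimensional eigenspaces is also an isomorphism, pinning down unique elements $\Psi_{\beta\beta},\Psi_{\beta\alpha},\Psi_{\alpha\beta},\Psi_{\alpha\alpha}$ on which $\exp^*_{f,p}$ takes the value $1$. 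Setting, for instance, $\xi^{\alpha\alpha}:=\partial_p^{-1}(\Psi_{\beta\beta}\otimes v_g^\alpha v_h^\alpha)$ and analogously for the other three, one obtains the desired basis. The main (and essentially only) obstacle is the index flip between summand labels and eigenspace labels, controlled by the identity $\alpha_g\beta_g\alpha_h\beta_h=1$; beyond that the proof is a direct assembly of the preceding lemmas and identifications.
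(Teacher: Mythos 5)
Your argument is correct and follows the same route as the paper, which states this proposition as a summary of the immediately preceding discussion rather than giving a separate proof: the isomorphism $\partial_p\colon \Sel_{(p)}(\Q,V)\to\coh^1_s(\Q_p,V)$, the four-fold decomposition of the singular quotient into one-dimensional pieces identified via the dual exponentials, and the Frobenius-eigenvalue bookkeeping coming from $\alpha_g\beta_g\alpha_h\beta_h=\chi(p)\bar\chi(p)=1$, which is exactly the index flip you isolate. Your normalization step via the factorization of $\exp^*_{\triangle\heartsuit}$ through $\exp^*_{f,p}$ likewise matches the paper's formula \eqref{eq: dualexp with pairing}.
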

\begin{remark}
  Notice that the basis \eqref{eq: basis xi}  depends on the choice of the $L$-basis $v_g^\alpha, v_g^\beta$ of $V_g$ and the $L$-basis $v_h^\alpha, v_h^\beta$ of $V_h$. Then each element of the basis $\{ \xi^{\alpha\alpha}, \xi^{\alpha\beta}, \xi^{\beta\alpha}, \xi^{\beta\beta}   \}$ depends on this choice up to multiplication by an element of $L^\times$.
\end{remark}

\section{Special value formula for the triple product $p$-adic $\L$-function in rank $0$}\label{sec: main thm}
We continue with the notation and assumptions of the previous section. In particular, $V:=V_{f}\otimes V_g\otimes V_h$ is the tensor product of the $p$-adic representations attached to the newforms $$f\in S_2(N_f)_\Q, \  \ g\in M_1(N_g,\chi)_L, \ \ h\in M_1(N_h,\bar{\chi})_L,$$
and we assume from now on that $\gcd(N_f, N_g, N_h)$ is square free. Recall that $V_g^{\circ}$ (resp. $V_h^{\circ}$) stands for the Artin representation attached to $g$ (resp. $h$) and $\rho$ denotes the tensor product representation
\begin{align*}
  \rho: \Gal(H/\Q)\longrightarrow\GL(V^{\circ}_{g}\otimes V_h^{\circ})\cong\GL_4(L).
\end{align*}
The complex $L$-function $$L(E, \rho,s)=L(f\otimes g\otimes h,s)$$ has entire continuation and satisfies a functional equation relating the value at $s$ with the value at $2-s$.
Let $\epsilon$ be the  sign of this functional equation and denote $N:=\mathrm{lcm}(N_f,N_g,N_h)$. Then $\epsilon$ is the product of local signs
\begin{align*}
  \epsilon=\prod_{v}\epsilon_v,
\end{align*}
where $v$ runs over the places of $\Q$ dividing $N$ or $\infty$. In this setting, $\epsilon_\infty=+1$. Assume also that $\epsilon_v=+1$ for all $v\mid N$. In particular, the global sign is $\epsilon = 1$ and the order of vanishing of $L(E,\rho,s)$ at the central point $s=1$ is even.

Recall that $p$ stands for a prime that does not divide $N$, and that $\bfg\in \Lambda_\bfg[[q]]$ (resp. $\bfh\in\Lambda_\bfh[[q]]$) is a Hida family passing through the $p$-stabilization $g_\alpha$ (resp. $h_\alpha$) such that $U_pg_\alpha=\alpha_g g_\alpha$ (resp. $U_ph_\alpha=\alpha_h h_\alpha$). Similarly, denote by $\bff\in \Lambda_\bff[[q]]$ a Hida family passing through the $p$-stabilization $f_\alpha$ of $f$. 

Denote by $\L_p^g(\bff,\bfg,\bfh)$ the triple product $p$-adic $\L$-function defined in \cite{DR2}, attached to the choice of $\Lambda$-adic test vector $(\breve{\bff},\breve{\bfg},\breve{\bfh})$ of \cite[Chap. 3]{Hsieh}. The values $\L_p^g(\bff,\bfg,\bfh)(k,\ell,m)$ of this $p$-adic $\L$-function at triples of integers $(k,\ell,m)$ with $\ell\geq k+m$ interpolate the square root of the algebraic part of
\begin{align}\label{eq: interpolation}
  L(\breve{\bff}_k\otimes \breve{\bfg}_\ell\otimes \breve{\bfh}_m,\frac{k+\ell+m-2}{2}),
\end{align}
where $\breve{\bff}_k,\breve{\bfg}_\ell, \breve{\bfh}_m$ denote the specializations of $\breve{\bff}, \breve{\bfg}, \breve{\bfh}$ at weights $k,\ell,m$.

There is an analogous triple product $p$-adic $\L$-function $\L_p^f(\bff,\bfg,\bfh)$ that interpolates \eqref{eq: interpolation} but for the values $(k,\ell,m)$ with $k\geq \ell + m$. In particular, $\L_p^f(\bff,\bfg,\bfh)(2,1,1)$ is directly related to $L(E,\rho,1)$.

The article \cite{DLR} studies the value $\L_p^g(\bff,\bfg,\bfh)(2,1,1)$ when $L(E,\rho,1)\neq 0$. In particular, the Elliptic Stark Conjecture predicts that when $E(H)^\rho$ is $2$-dimensional $\L_p^g(\bff,\bfg,\bfh)(2,1,1)$ is related to the $p$-adic logarithms of global elements in $E(H)^\rho$. 

In the present note, our running Assumption \ref{ass: rank 0} is that $\Sel_p(\Q,V)=0$. This implies that $E(H)^\rho=0$ and, conjecturally, it also implies that $L(E,\rho,1)\neq 0$. The main result of this section is an explicit formula for $\L_p^g(\bff,\bfg,\bfh)(2,1,1)$ in this case, and this can be seen as completing the study of $\L_p^g(\bff,\bfg,\bfh)(2,1,1)$ initiated in \cite{DLR}. The main tool that we will use are the generalized Kato classes 
\begin{align*}
  \kappa:=\kappa(f,g_\alpha,h_\alpha)\in\Sel_{(p)}(\Q,V)
\end{align*}
introduced in \cite[\S3]{DR2}. Next, we briefly recall the relation between $\kappa$ and the  $p$-adic $L$-values $\L^f_p(\bff,\bfg,\bfh)(2,1,1)$ and $\L^g_p(\bff,\bfg,\bfh)(2,1,1)$. To lighten the notation, from now on we put $\L^f_p:=\L^f_p(\bff,\bfg,\bfh)(2,1,1)$ and $\L_p^g:=\L^g_p(\bff,\bfg,\bfh)(2,1,1)$. Let also
\begin{align*}
  \pi_{\alpha\beta}:\coh^1(\Q_p,V)\longrightarrow \coh^1(\Q_p,V^{\alpha\beta})
\end{align*}
be the projection map induced by the natural map $V\ra V^{\alpha\beta}$. 


\begin{proposition}[Darmon--Rotger]\label{prop: relation with Lpf}
  \begin{enumerate}
  \item The element $\partial_p\kappa$ lies in the image of the natural map
  \begin{align*}
   \coh^1_s(\Q_p,V^{\beta\beta})\lra \coh_s^1(\Q_p,V) .
  \end{align*}
In particular, $\partial_p\kappa$ can be viewed as an element of  $\coh^1_s(\Q_p,V^{\beta\beta})$. Moreover,
  	\begin{equation}\label{eq: relation with Lpf}
	\exp_{\beta\beta}^*(\partial_p\kappa)=  \dfrac{2(1-p\alpha_f\alpha_g^{-1}\alpha_h^{-1})}{\alpha_g\alpha_h(1-\alpha_f^{-1}\alpha_g\alpha_h)(1-\chi^{-1}(p)\alpha_f^{-1}\alpha_g\alpha_h^{-1})}\times \L_p^f.
      \end{equation}

      \item The element $\pi_{\alpha\beta}\res_p\kappa\in\coh^1(\Q_p,V^{\alpha\beta})$ belongs to $\coh_f^1(\Q_p, V^{\alpha\beta})$, and 
	\begin{equation}\label{eq: relation with Lpg}
\log_{\alpha\beta}(\pi_{\alpha\beta}\res_p\kappa)=\L_p^g \times 2(1-\chi(p)p^{-1}\alpha_fa_p(g)^{-1}a_p(h))^{-1}.
\end{equation}

  \end{enumerate}
  \end{proposition}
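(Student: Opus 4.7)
The plan is to realize the generalized Kato class $\kappa = \kappa(f, g_\alpha, h_\alpha)$ as the weight-$(2,1,1)$ specialization of a three-variable $\Lambda$-adic cohomology class $\kappa(\bff,\bfg,\bfh) \in H^1(\Q, \V_{\bff\bfg\bfh})$ constructed in \cite{DR2} as the $\Lambda$-adic \'etale Abel-Jacobi image of generalized Gross-Kudla-Schoen diagonal cycles on triple-product Kuga-Sato varieties. This $\Lambda$-adic class satisfies two complementary explicit reciprocity laws: one relating its singular part under the big dual exponential to $\L_p^f(\bff,\bfg,\bfh)$, and another relating a crystalline projection of it under the big Bloch-Kato logarithm to $\L_p^g(\bff,\bfg,\bfh)$. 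Both assertions of the proposition will follow by specializing these two laws at $(2,1,1)$ and identifying the resulting Euler and period factors.

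For part $(1)$, the $\Lambda$-adic class is constructed compatibly with the ordinary filtrations on each of $V_\bff, V_\bfg, V_\bfh$, which places $\partial_p\kappa(\bff,\bfg,\bfh)$ in $H^1_s(\Q_p, V_\bff^- \otimes V_\bfg^+ \otimes V_\bfh^+)$. Specializing at $(2,1,1)$ yields a class in $H^1_s(\Q_p, V^{\beta\beta})$ because $y_g(V_\bfg^+) = V_g^\beta$ and analogously for $h$, giving the first claim. For the explicit formula, one invokes the first reciprocity law: the $\Lambda$-adic big dual exponential applied to $\kappa(\bff,\bfg,\bfh)$ interpolates the square-roots of the central values of $L(\breve{\bff}_k\otimes\breve{\bfg}_\ell\otimes\breve{\bfh}_m,-)$ for $(k,\ell,m)$ in the $\bff$-dominated region $k\geq \ell+m$, which is exactly the region in which $\L_p^f$ interpolates. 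Since $(2,1,1)$ lies on the boundary of this region, one applies Hsieh's improved $p$-adic $L$-function machinery (equivalently, evaluates Deligne's explicit geometric interpolation at the nearest classical weights and continues analytically) to produce the claimed coefficient. That coefficient records the ratio of $p$-stabilization periods of $f_\alpha$ versus $f$ at weight two, the test-vector correction between $\breve{\bfg}_1,\breve{\bfh}_1$ and $g,h$, and the Gauss-sum normalization built into \eqref{eq: pairing between etas and omegas}.

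For part $(2)$, the projection $\pi_{\alpha\beta}$ corresponds to replacing the $V_\bfg^+$-factor by $V_\bfg^-$ in the filtration; in this direction, the $\Lambda$-adic class lies in the crystalline subspace of the cohomology of $V_\bff \otimes V_\bfg^- \otimes V_\bfh^+$, which at the point $(2,1,1)$ yields $H^1_f(\Q_p, V^{\alpha\beta})$. The second reciprocity law of \cite{DR2} asserts that the big Bloch-Kato logarithm of this crystalline projection is interpolated by $\L_p^g(\bff,\bfg,\bfh)$ over the $\bfg$-dominated region $\ell\geq k+m$; the point $(2,1,1)$ lies on the boundary of that region as well, and specializing yields \eqref{eq: relation with Lpg}. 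The Euler factor $(1-\chi(p)p^{-1}\alpha_f a_p(g)^{-1} a_p(h))$ is the Panchishkin-type factor needed to extend the big logarithm across this boundary. I expect the main technical difficulty to lie in the careful bookkeeping of Euler and test-vector factors, since the evaluation point $(2,1,1)$ sits outside the classical interpolation region of both $\L_p^f$ and $\L_p^g$, and the formulas must be transcribed from \cite[\S3, \S4]{DR2} into the normalization conventions fixed in Section~\ref{sec: Selmer groups}.
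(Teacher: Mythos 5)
Your proposal is essentially a correct reconstruction of the argument, but it is worth noting that the paper's own proof is purely a citation: part (1) is \cite[Proposition 2.8]{DR2} together with Proposition 5.2 and Theorem 5.3 of \cite{DR2}, and part (2) follows from Proposition 5.1 and Theorem 5.3 of \cite{DR2}. What you have written is a sketch of how those cited results are themselves proved (diagonal cycles, the $\Lambda$-adic class, the two explicit reciprocity laws), which is more than the paper does but is consistent with it. The one place where your route genuinely differs is the crystallinity claim in part (2): you argue that the $\Lambda$-adic class lies in the crystalline subspace of the cohomology of $V_\bff\otimes V_\bfg^-\otimes V_\bfh^+$ by appeal to the filtration structure of the construction, whereas the paper deduces it as a purely formal consequence of part (1) --- since $\partial_p\kappa$ is concentrated in the $\beta\beta$-component of the singular quotient, the projection $\pi^s_{\alpha\beta}\partial_p\kappa$ vanishes, which is exactly the statement that $\pi_{\alpha\beta}\res_p\kappa\in\coh^1_f(\Q_p,V^{\alpha\beta})$. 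The paper's deduction is shorter and requires no further input from the $\Lambda$-adic construction; yours buys a more conceptual explanation but at the cost of re-invoking the geometry. One factual slip: you assert that $(2,1,1)$ lies on the boundary of the $\bfg$-dominated region $\ell\geq k+m$; it does not ($\ell=1<3=k+m$), and the paper explicitly emphasizes that $(2,1,1)$ lies \emph{outside} the interpolation region of $\L_p^g$. This does not invalidate your argument, since the second reciprocity law is an identity of $\Lambda$-adic objects valid at every specialization, but the phrase ``extend the big logarithm across this boundary'' misdescribes what is happening there.
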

  \begin{proof} The fact that $\partial_p\kappa$ is the image of an element in $\coh^1_s(\Q_p,V^{\beta\beta})$ is \cite[Proposition 2.8]{DR2}. The equality \eqref{eq: relation with Lpf} follows from Proposition 5.2 and  Theorem 5.3 of \cite{DR2}. 
  By part $(1)$ of the proposition  $\pi^s_{\alpha\beta}\partial_p\kappa=0$ in the singular quotient $\coh_s^1(\Q_p, V^{\alpha\beta})$. This means that $\pi_{\alpha\beta}\res_p\kappa$ belongs to $\coh^1_f(\Q_p,V^{\alpha\beta})$. Equality \eqref{eq: relation with Lpg} follows from Proposition 5.1, Theorem 5.3 of \cite{DR2}. 
\end{proof}

Using the $\kappa$ recalled above and the basis \eqref{eq: basis xi} of $\Sel_{(p)}(V)$, we can give a precise formula for $\L_p^g$ in the rank $0$ setting. Define $R_{\beta\alpha}\in E(H_p)^{\beta\alpha}$ by the equality
\begin{equation}\label{eq: def of Ralphabeta}
	\pi_{\alpha\beta}\res_p{\xi}^{\beta\beta}={R}_{\beta\alpha}\otimes v_g^{\alpha}v_h^{\beta}\in\coh_f^1(\Q_p,V^{\alpha\beta})=(E(H_p)^{\beta\alpha}\otimes V_{gh}^{\alpha\beta})^{G_{\Q_p}}.
\end{equation}

\begin{theorem}\label{thm: main}
The class $\kappa$ is a multiple of $\xi^{\beta\beta}$; more precisely,
\begin{align*}
  \kappa=\dfrac{\Theta_g\Theta_h2(1-p\alpha_f\alpha_g^{-1}\alpha_h^{-1})\L_p^f }{\alpha_g\alpha_h(1-\alpha_f^{-1}\alpha_g\alpha_h)(1-\chi^{-1}(p)\alpha_f^{-1}a_p(g)a_p(h)^{-1})}\cdot{\xi}^{\beta\beta}.
\end{align*}
Moreover, if we define the quantities
\begin{align*}
  \L_{g_\alpha}:=\dfrac{\Omega_g}{\Theta_g}, \ \ \ \cE:=\dfrac{(1-\chi(p)p^{-1}\alpha_g^{-1}\alpha_h)(1-p\alpha_f\alpha_g^{-1}\alpha_h^{-1})}{\alpha_g\alpha_h(1-\alpha_f^{-1}\alpha_g\alpha_h)(1-\chi^{-1}(p)\alpha_f^{-1}\alpha_g\alpha_h^{-1})}
\end{align*}
then we have that
\begin{align*}
\L^g_p={\cE }\times\dfrac{\log_p({R}_{\beta\alpha})}{\L_{g_\alpha}}\times{\L_p^f} \mod L^\times.
\end{align*}
\end{theorem}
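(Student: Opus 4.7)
The plan is to determine $\kappa$ inside $\Sel_{(p)}(V)$ by expanding it in the basis $\{\xi^{\alpha\alpha},\xi^{\alpha\beta},\xi^{\beta\alpha},\xi^{\beta\beta}\}$, then to extract the formula for $\L_p^g$ by applying the projection $\pi_{\alpha\beta}\circ\res_p$ to the resulting identity. The whole argument is a matter of unwinding the normalizations; the only two genuine inputs are the two halves of Proposition~\ref{prop: relation with Lpf} together with the defining properties of the $\xi$-basis and the Remark~\ref{rem: logaritmos} description of $\log_{\alpha\beta}$.

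First, since $\partial_p\colon\Sel_{(p)}(\Q,V)\to\coh^1_s(\Q_p,V)$ is an isomorphism diagonalizing the $\xi^{\triangle\heartsuit}$ with respect to the decomposition \eqref{eq: relaxed selmer group as singular quotient}, Proposition~\ref{prop: relation with Lpf}(1) asserts that $\partial_p\kappa$ has vanishing components outside of $\coh^1_s(\Q_p,V^{\beta\beta})$. Hence there is a unique $c_1\in L_p$ with $\kappa=c_1\,\xi^{\beta\beta}$. To pin down $c_1$, apply $\exp^*_{\beta\beta}$ to $\partial_p\kappa=c_1\,\partial_p\xi^{\beta\beta}$. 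The left side equals the explicit multiple of $\L_p^f$ given in \eqref{eq: relation with Lpf}. For the right side, use formula \eqref{eq: dualexp with pairing}: since $\exp^*_{f,p}(\Psi_{\alpha\alpha})=1$ by the defining property of $\xi^{\beta\beta}$, the image is the pairing of $1\otimes v_g^\beta v_h^\beta$ against $\omega_g\omega_h=\Omega_g\Omega_h\otimes v_g^\alpha v_h^\alpha$, which equals $\Omega_g\Omega_h$ up to a factor in $L^\times$ coming from the $L$-rational pairing of bases. Solving for $c_1$ and using the Gauss-sum identity \eqref{eq: pairing between etas and omegas}, which gives $\Omega_g\Theta_g\cdot\Omega_h\Theta_h\in L^\times$ (the product being essentially $\mathfrak{g}(\chi)\mathfrak{g}(\bar\chi)\in\Q^\times$), one may rewrite $1/(\Omega_g\Omega_h)$ as a scalar multiple of $\Theta_g\Theta_h$. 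Substituting this into the expression for $c_1$ yields the first displayed formula for $\kappa$.

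For the second assertion, apply $\pi_{\alpha\beta}\circ\res_p$ to the identity $\kappa=c_1\,\xi^{\beta\beta}$: by the definition \eqref{eq: def of Ralphabeta} of $R_{\beta\alpha}$ this gives
\[
\pi_{\alpha\beta}\res_p\kappa = c_1\cdot R_{\beta\alpha}\otimes v_g^\alpha v_h^\beta\in\coh^1_f(\Q_p,V^{\alpha\beta}).
\]
Now apply $\log_{\alpha\beta}$. By Remark~\ref{rem: logaritmos}, this map coincides with $\log_{f,p}$ on the $E(H_p)$-factor followed by pairing against $\eta_g\omega_h=\Theta_g\Omega_h\otimes v_g^\beta v_h^\alpha$, producing $\log_p(R_{\beta\alpha})\cdot\Theta_g\Omega_h$ up to $L^\times$. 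Comparing with Proposition~\ref{prop: relation with Lpf}(2) and dividing gives
\[
\L_p^g \;\equiv\; c_1\cdot\Theta_g\Omega_h\cdot\log_p(R_{\beta\alpha}) \pmod{L^\times}.
\]
Inserting the value of $c_1$ and again applying the period identity $\Omega_g\Theta_g\Omega_h\Theta_h\in L^\times$ to turn $\Theta_g\Theta_h\cdot\Theta_g\Omega_h$ into $\Theta_g/\Omega_g=1/\L_{g_\alpha}$, the Euler-factor quotient collapses to $\cE$, yielding the stated equality modulo $L^\times$.

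The main delicate point is the bookkeeping of periods and local pairings: one needs the explicit form of the dual exponential \eqref{eq: dualexp with pairing} and of $\log_{\alpha\beta}$ to be compatible enough for the factors $\Omega_g,\Omega_h,\Theta_g,\Theta_h$ to combine through the Gauss sum identity \eqref{eq: pairing between etas and omegas} into $\L_{g_\alpha}=\Omega_g/\Theta_g$ and into an element of $L^\times$. Once this bookkeeping is organized, the rest of the argument is forced by the diagonalization of $\partial_p$ on the basis $\{\xi^{\triangle\heartsuit}\}$ and by the two formulas of Proposition~\ref{prop: relation with Lpf}.
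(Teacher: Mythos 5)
Your proposal is correct and follows essentially the same route as the paper's own proof: use Proposition~\ref{prop: relation with Lpf}(1) and the normalization $\exp^*_{f,p}(\Psi_{\alpha\alpha})=1$ to identify the scalar $c_1$ with $\kappa=c_1\xi^{\beta\beta}$, then apply $\pi_{\alpha\beta}\circ\res_p$, the description of $\log_{\alpha\beta}$ from Remark~\ref{rem: logaritmos}, and Proposition~\ref{prop: relation with Lpf}(2), with the periods recombined via $\langle\omega_g,\eta_g\rangle=\mathfrak{g}(\chi)$ into $\L_{g_\alpha}=\Omega_g/\Theta_g$. The only cosmetic difference is that you evaluate the pairings against $\omega_g\omega_h$ and $\eta_g\omega_h$ directly in terms of $\Omega$'s and $\Theta$'s and then invoke $\Omega_g\Theta_g\Omega_h\Theta_h\in L^\times$, whereas the paper substitutes $v_g^\beta v_h^\beta=\eta_g\eta_h/(\Theta_g\Theta_h)$ first; these agree modulo $L^\times$, which is all the final statement asserts.
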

\begin{proof}
	By Proposition \ref{prop: relation with Lpf}, $\kappa$ is an element of $\Sel_{(p)}(\Q,V)$ such that
	\begin{equation}\label{eq: exp kappa}
		\exp^*(\partial_p\kappa)=(0,0,0,\dfrac{2(1-p\alpha_f\alpha_g^{-1}\alpha_h^{-1})}{\alpha_g\alpha_h(1-\alpha_f^{-1}\alpha_g\alpha_h)(1-\chi^{-1}(p)\alpha_f^{-1}\alpha_g\alpha_h^{-1})}\times \L_p^f).
	\end{equation} 
	Then $\kappa$ is a multiple of the element $\xi^{\beta\beta}$; indeed
	$$\kappa=\dfrac{\exp^*_{\beta\beta}(\partial_p\kappa)}{\exp^*_{\beta\beta}(\partial_p{\xi}^{\beta\beta})}{\xi}^{\beta\beta}.$$
Observe that \eqref{eq: exp kappa} gives us the expression for the numerator. We now compute the denominator. 
	\begin{align*}
		\exp^*_{\beta\beta}(\partial_p{\xi}^{\beta\beta})& = \langle  \exp_{f,p}^* (\Psi_{\alpha\alpha})\otimes v_g^\beta v_h^\beta,\omega_g\omega_h \rangle\\
		 &=\dfrac{\exp_{f,p}^* (\Psi_{\alpha\alpha})}{\Theta_g\Theta_h}\\
		 & = \dfrac{1}{\Theta_g\Theta_h}.
	\end{align*}
	Here we used the fact that $\eta_g\eta_h=\Theta_g\Theta_h v_g^\beta v_h^\beta$.
	So we get 
	\begin{align*}
	\kappa & =\dfrac{\exp^*_{\beta\beta}(\partial_p\kappa)}{\exp^*_{\beta\beta}(\partial_p{\xi}^{\beta\beta})}\cdot{\xi}^{\beta\beta}\\
	& = \dfrac{2(1-p\alpha_f\alpha_g^{-1}\alpha_h^{-1})\Theta_g\Theta_h}{\alpha_g\alpha_h(1-\alpha_f^{-1}\alpha_g\alpha_h)(1-\chi^{-1}(p)\alpha_f^{-1}\alpha_g\alpha_h^{-1})}\times \L_p^f\cdot{\xi}^{\beta\beta}
	\end{align*} 
	By \eqref{eq: relation with Lpg},
	\begin{align*}
		\L_p^g & =\frac{1}{2}(1-\chi(p)p^{-1}\alpha_g^{-1}\alpha_h)\log_{\alpha\beta}(\pi_{\alpha\beta}\res_p\kappa)\\
		& = \dfrac{\Theta_g\Theta_h(1-\chi(p)p^{-1}\alpha_g^{-1}\alpha_h)(1-p\alpha_f\alpha_g^{-1}\alpha_h^{-1})}{\alpha_g\alpha_h(1-\alpha_f^{-1}\alpha_g\alpha_h)(1-\chi^{-1}(p)\alpha_f^{-1}\alpha_g\alpha_h^{-1})}\times \L_p^f\log_{\alpha\beta}(\pi_{\alpha\beta}\res_p{\xi}^{\beta\beta})\\
		& =\cE \Theta_g\Theta_h \L_p^f \log_{\alpha\beta}(\pi_{\alpha\beta}\res_p{\xi}^{\beta\beta})\\
		& = \cE \Theta_g\Theta_h \L_p^f\langle \log_p({R}_{\beta\alpha})\otimes v_g^\alpha v_h^\beta,\eta_g\omega_h  \rangle\\
		& = {\cE\L_p^f }\dfrac{  \Theta_g\Theta_h }{\Omega_g\Theta_h}\log_p({R}_{\beta\alpha})\\
		& = \cE\L_p^f\dfrac{\Theta_g}{\Omega_g}\log_p({R}_{\beta\alpha})\\
		& = \dfrac{\cE\L_p^f }{\L_{g_\alpha}}\log_p({R}_{\beta\alpha})
	\end{align*}
	since $\omega_g\eta_h=\Omega_g\Theta_h\otimes v_g^\alpha v_h^\beta$.
\end{proof}

We end this section by noting that $\L_{g_\alpha}$ is often expected to be related to the Gross--Stark Unit $u_{g_\alpha}$ attached to the modular form $g_\alpha$ as defined in \cite[\S1]{DLR}. More precisely, under the additional assumption that $g$ is not the theta series of a Hecke character of a real quadratic field in which $p$ splits, \cite[Conjecture 2.1]{DR2b} predicts that
\begin{align}\label{eq: conj unit}
  \L_{g_\alpha}\stackrel{?}{=}\log_p(u_{g_\alpha}) \ \mod L^\times.
\end{align}
Thus we obtain the following consequence of Theorem \ref{thm: main}, under the aforementioned hypothesis:
\begin{corollary}
	Assuming the equality \eqref{eq: conj unit}, if $\Sel_p(\Q,V)=0$ then 
	$$\L^g_p=\mathcal{E}\times\dfrac{\log_p(R_{\beta\alpha})}{\log_p(u_{g_\alpha})}\times \L_p^f.$$
\end{corollary}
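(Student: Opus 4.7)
The plan is to exploit the two complementary formulas in Proposition~\ref{prop: relation with Lpf}: part (1) tells us the dual-exponential of the singular residue of $\kappa$ in terms of $\L_p^f$, and part (2) tells us its Bloch--Kato logarithm (after the $\pi_{\alpha\beta}$-projection) in terms of $\L_p^g$. The basis $\{\xi^{\alpha\alpha},\xi^{\alpha\beta},\xi^{\beta\alpha},\xi^{\beta\beta}\}$ from \eqref{eq: basis xi} is precisely dual to the four $\exp^*_{\triangle\heartsuit}$ maps under the decomposition \eqref{eq: relaxed selmer group as singular quotient} of the relaxed Selmer group with the singular quotient (via the isomorphism $\partial_p$). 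So the idea is: use (1) to pin down $\kappa$ as a scalar multiple of $\xi^{\beta\beta}$, then feed this identification back into (2) to read off $\L_p^g$.

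For the first step I would proceed as follows. Proposition~\ref{prop: relation with Lpf}(1) states that $\partial_p\kappa$ lies in the $V^{\beta\beta}$-summand of the singular quotient and computes $\exp^*_{\beta\beta}(\partial_p\kappa)$ explicitly as a multiple of $\L_p^f$. By the defining property of the basis $\{\xi^{\bullet\bullet}\}$, a relaxed Selmer class with singular residue concentrated in the $\beta\beta$-component must be a scalar multiple of $\xi^{\beta\beta}$; the scalar is $\exp^*_{\beta\beta}(\partial_p\kappa)/\exp^*_{\beta\beta}(\partial_p\xi^{\beta\beta})$. To evaluate the denominator, I would use the identification \eqref{eq: dualexp with pairing}, namely that $\exp^*_{\beta\beta}(\partial_p\xi^{\beta\beta})=\langle\exp^*_{f,p}(\Psi_{\alpha\alpha})\otimes v_g^\beta v_h^\beta,\,\omega_g\omega_h\rangle$. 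Since $\exp^*_{f,p}(\Psi_{\alpha\alpha})=1$ by construction and $\eta_g\eta_h=\Theta_g\Theta_h\cdot v_g^\beta v_h^\beta$ by \eqref{eq: definition of Omega and Theta}, the pairing evaluates to $1/(\Theta_g\Theta_h)$. Combining yields the first claim of the theorem.

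Having identified $\kappa$, I would then feed this into Proposition~\ref{prop: relation with Lpf}(2), which expresses $\L_p^g$ as a scalar multiple of $\log_{\alpha\beta}(\pi_{\alpha\beta}\res_p\kappa)$. Substituting $\kappa=c\cdot\xi^{\beta\beta}$ and using the defining equality \eqref{eq: def of Ralphabeta}, $\pi_{\alpha\beta}\res_p\xi^{\beta\beta}=R_{\beta\alpha}\otimes v_g^\alpha v_h^\beta$, converts the problem to computing $\log_{\alpha\beta}(R_{\beta\alpha}\otimes v_g^\alpha v_h^\beta)$. By the parallel of Remark~\ref{rem: logaritmos} for the $(\alpha,\beta)$-component, this logarithm equals $\langle\log_{f,p}(R_{\beta\alpha})\otimes v_g^\alpha v_h^\beta,\,\eta_g\omega_h\rangle$. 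Using $\omega_g\eta_h=\Omega_g\Theta_h\otimes v_g^\alpha v_h^\beta$ and the fact that the pairing is dual on the $v_g^\bullet v_h^\bullet$ factors, one obtains a prefactor $\Theta_g\Theta_h/(\Omega_g\Theta_h)=\Theta_g/\Omega_g=1/\L_{g_\alpha}$. Collecting the various Frobenius-eigenvalue factors coming from (1) and (2) and absorbing rational scalars into the $L^\times$-ambiguity gives the Euler-factor $\cE$ and produces the formula.

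The main obstacle is the bookkeeping of the pairings and period normalizations. Multiple conventions are in play at once: the $\Lambda$-adic pairing \eqref{eq: lambda-adic pairing} and its specialization \eqref{eq: pairing Vg+}; the normalization \eqref{eq: pairing between etas and omegas} using Gauss sums; the identifications \eqref{eq: identifications} between $V_{gh}^{\triangle\heartsuit}$ and its $L_p$-dual $V_{gh}^{\heartsuit\triangle}$; and the identifications \eqref{eq: H1s}--\eqref{eq: H1f}. Getting $\Omega_g, \Theta_g, \Omega_h, \Theta_h$ to cancel correctly so that only $\L_{g_\alpha}=\Omega_g/\Theta_g$ survives (up to $L^\times$), with the remaining Euler-product assembling exactly into $\cE$, is the delicate point; everything else in the argument is structural.
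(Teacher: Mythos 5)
Your argument is correct and follows the paper's route: the paper obtains this corollary by simply substituting the conjectured equality \eqref{eq: conj unit}, $\L_{g_\alpha}=\log_p(u_{g_\alpha})\bmod L^\times$, into the formula of Theorem~\ref{thm: main}, and your two-step derivation (pinning down $\kappa$ as a multiple of $\xi^{\beta\beta}$ via $\exp^*_{\beta\beta}(\partial_p\kappa)$ and then feeding this into the logarithm formula of Proposition~\ref{prop: relation with Lpf}(2)) is precisely the paper's own proof of that theorem. The only cosmetic point is that you re-prove Theorem~\ref{thm: main} rather than citing it, and you leave the final substitution of $\log_p(u_{g_\alpha})$ for $\L_{g_\alpha}$ implicit, but since that equality is the stated hypothesis this is immediate.
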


\section{The case of theta series of an imaginary quadratic field $K$ where $p$ splits }\label{sec: p split in K}
In this section we will consider a particular case where $g$ and $h$ are theta series of the same imaginary quadratic field in which $p$ splits. We will see that in this setting the representation $V$ decomposes in a way that forces $\L_p^g$ to vanish when the complex $L$-function does not vanish at the central critical point; that is, the special value of the $p$-adic $\L$-function vanishes in analytic rank $0$.

Let $K$ be an imaginary quadratic field of discriminant $D_K$. Let $\psi_g,\psi_h\colon \A_K^\times\ra \C^\times$ be two finite order Hecke characters of $K$ of conductors $\fc_g,\fc_h$ and central characters $\varepsilon, \bar{\varepsilon}$ respectively. Here $\varepsilon \colon \A_\Q^\times\ra \C^\times$  is a finite order character of  and $\bar{\varepsilon}$ denotes is complex conjugate. Let $g$ and $h$ be the theta series attached to $\psi_g$ and $\psi_h$. They are modular forms of weight one, and their levels and nebentype characters are given by
\begin{align*}
N_g:=D_K\cdot\norm_K(\fc_g), \  \ N_h:=D_K\cdot\norm_K(\fc_h), \ \ \chi:=\chi_K\cdot\varepsilon, \ \ \bar{\chi}=\chi_K\cdot\bar{\varepsilon},
\end{align*}
where $\norm_K$ stands for the norm on ideals of $K$ and we regard $\varepsilon$ and $\bar{\varepsilon}$ as Dirichlet characters via class field theory. That is to say,
\begin{align*}
g\in M_1(N_g,\chi), \ \ \text{and} \ \  h\in M_1(N_h,\bar{\chi}).
\end{align*}
Let $f\in S_2(N_f)$ be a newform with rational coefficients and let $E$ be the associated elliptic curve over $\Q$. We will particularize some of the results of the previous sections to this choice of forms $f$, $g$, and $h$, so we will use the same notations as before. In particular, $\rho$ stands for the Artin representation afforded by $V_g\otimes V_h$ and $p$ is a prime that does not divide $N_f\cdot N_g\cdot N_h$. In this section, we will make the following additional assumptions:
\begin{enumerate}
\item $\gcd(N_f,\fc_g\fc_h)=1;$
\item $p$ splits in $K$.
\end{enumerate}

A finite order Hecke character $\psi$ of $K$ can be regarded, via class field theory, as a Galois character $\psi\colon G_K\ra\A_K^\times$. Let $\sigma_0$ be any element in  $G_\Q\setminus G_K$. We denote by $\psi'$ the character defined by $\psi'(\sigma):=\psi(\sigma_0\sigma\sigma_0^{-1})$ (this does not depend on the particular choice of $\sigma_0$). Also, $\psi$ gives rise to a $1$-dimensional representation of $G_K$, and we let  $V_\psi=\Ind_K^\Q(\psi)$ denote the induced representation; it is a $2$-dimensional representation of $G_\Q$. Observe that, with this notation, $V_g=V_{\psi_g}$ and $V_h=V_{\psi_h}$.

There is a well-known decomposition of $V_g\otimes V_h$ as the direct sum of two representations:  
\begin{equation}\label{eq: factorisation of reps attached to gh}
	V_g\otimes V_h=V_{\psi_1}\oplus V_{\psi_2},
\end{equation}
where the characters $\psi_1$ and $\psi_2$ are
\begin{align*}
  \psi_1:=\psi_g\psi_h,  \ \ \text{and} \ \ \psi_2:=\psi_g\psi'_h.
\end{align*}
This induces a decomposition of the representation $V=V_f\otimes V_g\otimes V_h$ as a direct sum of two representations:
\begin{equation}\label{eq: factorisation of reps}
V=V_1\oplus V_2,
\end{equation}
where
\begin{align*}
V_1:=V_f\otimes V_{\psi_1}, \ \ \text{and} \ \  V_2:=V_f\otimes V_{\psi_2}.  
\end{align*}
This induces a factorization of complex $L$-functions
\begin{align*}
  L(E,\rho,s)=L(E,\psi_1,s)\cdot L(E,\psi_2,s).
\end{align*}
Under our assumption that $\gcd(N_f,\fc_g\fc_h)=1$ the local signs of $L(E,\psi_1,s)$ and $L(E,\psi_2,s)$ are equal, so that the local signs of $L(E,\rho,s)$ are all equal to $+1$ and therefore the assumption on local signs of Section \ref{sec: main thm} is satisfied.

\begin{theorem}\label{thm: CM case} In the setting of this section, if   $L(E,\rho,1)\neq 0$ then $\L_p^g=0.$
\end{theorem}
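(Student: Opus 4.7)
The plan is to exploit the decomposition $V=V_1\oplus V_2$ to split the generalized Kato class $\kappa\in\Sel_{(p)}(\Q,V)$ as $\kappa=\kappa_1+\kappa_2$ with $\kappa_i\in\Sel_{(p)}(\Q,V_i)$, and then to show that $\kappa_2=0$, which via Proposition \ref{prop: relation with Lpf}(2) forces $\L_p^g=0$. The first step is to match the four local eigenspaces $V^{\triangle\heartsuit}$ with the global summands. Writing $p\cO_K=\fp\bar\fp$ and normalizing so that $\alpha_g=\psi_g(\fp)$, $\beta_g=\psi_g(\bar\fp)$, $\alpha_h=\psi_h(\fp)$, $\beta_h=\psi_h(\bar\fp)$, the Frobenius eigenvalues on $V_{\psi_1}|_{G_{\Q_p}}$ are $\alpha_g\alpha_h$ and $\beta_g\beta_h$, and on $V_{\psi_2}|_{G_{\Q_p}}$ they are $\alpha_g\beta_h$ and $\beta_g\alpha_h$. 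Consequently
\begin{equation*}
V_1|_{G_{\Q_p}}=V^{\alpha\alpha}\oplus V^{\beta\beta},\qquad V_2|_{G_{\Q_p}}=V^{\alpha\beta}\oplus V^{\beta\alpha}.
\end{equation*}

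The decomposition $V=V_1\oplus V_2$ is Galois-equivariant, hence compatible with the formation of Selmer groups, with the map $\partial_p$, and with the projections $\pi_{\alpha\beta}$, $\pi_{\beta\alpha}$. By Proposition \ref{prop: relation with Lpf}(1), $\partial_p\kappa$ lies in $\coh^1_s(\Q_p,V^{\beta\beta})$, which by the above identification sits inside $\coh^1_s(\Q_p,V_1)$. Projecting to the $V_2$-component gives $\partial_p\kappa_2=0$, so that $\kappa_2\in\Sel_p(\Q,V_2)$. The factorization $L(E,\rho,s)=L(E,\psi_1,s)\cdot L(E,\psi_2,s)$ combined with the hypothesis $L(E,\rho,1)\neq 0$ gives $L(E,\psi_2,1)\neq 0$; together with the running rank-zero assumption $\Sel_p(\Q,V)=\Sel_p(\Q,V_1)\oplus\Sel_p(\Q,V_2)=0$ inherited from Assumption \ref{ass: rank 00}, this forces $\kappa_2=0$.

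Finally, since $V^{\alpha\beta}\subset V_2$, the projection $\pi_{\alpha\beta}\res_p\kappa=\pi_{\alpha\beta}\res_p\kappa_2$ vanishes, and Proposition \ref{prop: relation with Lpf}(2) then yields $\L_p^g=0$. The substantive step in this outline is the middle one: translating the global factorization into a compatible decomposition of local cohomology at $p$ and of Selmer classes. This rests entirely on the split-of-$p$ hypothesis, which is precisely what makes $V_{\psi_2}|_{G_{\Q_p}}$ split into two unramified characters so that the eigenspaces $V^{\alpha\beta}, V^{\beta\alpha}$ lie inside $V_2$ rather than mixing across $V_1$ and $V_2$. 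In the non-split case this clean separation fails and the argument breaks down.
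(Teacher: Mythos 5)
Your argument is correct, and it rests on exactly the same structural fact as the paper's proof: because $p$ splits in $K$, the global decomposition $V=V_1\oplus V_2$ is compatible at $p$ with the Frobenius-eigenspace decomposition, giving $V_1|_{G_{\Q_p}}=V^{\alpha\alpha}\oplus V^{\beta\beta}$ and $V_2|_{G_{\Q_p}}=V^{\alpha\beta}\oplus V^{\beta\alpha}$, so that the class controlling $\L_p^g$ has no $V_2$-component and its $\pi_{\alpha\beta}$-projection dies. The routing differs, though, in a way worth noting. The paper works with the basis element $\xi^{\beta\beta}$ of $\Sel_{(p)}(\Q,V)$: it shows $\xi^{\beta\beta}$ corresponds to $(0,\zeta^{\beta\beta})\in\Sel_{(p)}(\Q,V_1)\oplus\Sel_{(p)}(\Q,V_2)$, concludes $\pi_{\alpha\beta}\res_p\xi^{\beta\beta}=0$, i.e.\ $R_{\beta\alpha}=0$, and then invokes Theorem \ref{thm: main}. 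You instead work directly with the Kato class $\kappa$, split it as $\kappa_1+\kappa_2$, use Proposition \ref{prop: relation with Lpf}(1) to see $\partial_p\kappa_2=0$, kill $\kappa_2$ by the vanishing of $\Sel_p(\Q,V_2)$, and conclude via Proposition \ref{prop: relation with Lpf}(2). Your route bypasses Theorem \ref{thm: main} and the construction of the adapted basis entirely, which makes it slightly more self-contained; the paper's route has the advantage of exhibiting the vanishing as the degeneration $R_{\beta\alpha}=0$ of the local point appearing in the main formula, which is the conceptually relevant statement for the Elliptic Stark picture.

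One small imprecision: you obtain $\Sel_p(\Q,V_2)=0$ from ``the running rank-zero assumption inherited from Assumption \ref{ass: rank 00}.'' But the theorem in Section \ref{sec: p split in K} assumes only $L(E,\rho,1)\neq 0$; the Selmer vanishing is not a hypothesis there but must be \emph{derived}. You already have the needed input --- the factorization gives $L(E,\psi_i,1)\neq 0$ for $i=1,2$, and since $\psi_1,\psi_2$ are ring class characters of the imaginary quadratic field $K$, the results of Gross--Zagier and Kolyvagin yield $\Sel_p(\Q,V_i)=0$ unconditionally. Replace the appeal to the assumption by this citation and your proof is complete.
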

\begin{proof}
If $L(E,\rho,1)\neq 0$ then $L(E,\psi_i,1)\neq 0$ for $i=1,2$. Note that $\psi_1$ and $\psi_2$ are ring  class characters of the imaginary quadratic field $K$. Then, by results of Gross--Zagier and Kolyvagin
\begin{equation}\label{eq: algebraic rank (0,0)}
	\Sel_p(\Q,V_i)=0 \ \ \text{for } i=1,2.
\end{equation}
The decomposition \eqref{eq: factorisation of reps} induces a decomposition of the Selmer groups 
\begin{equation}\label{eq: decomposition of Selmer}
	\Sel_p(\Q,V)=\Sel_p(\Q,V_1)\oplus\Sel_p(\Q,V_2),
\end{equation} 
 and analogously for the relaxed and the strict Selmer groups of $V$. In particular, $\Sel_p(\Q,V)=0.$

Since $p$ splits in $K$ we can write $p\cO_K=\p\bar{\p}$, and from our assumption that $p\nmid N_f\cdot N_g\cdot N_h$ we see that $p\nmid \fc_g\fc_h$. Without loss of generality we can suppose that 
$$\psi_g(\fp)=\alpha_g, \  \ \psi_g(\bar{\p})=\beta_g, \ \ \psi_h(\fp)=\alpha_h, \  \ \psi_h(\bar{\p})=\beta_h,$$ so that 
$$V_1=V^{\alpha\alpha}\oplus V^{\beta\beta} \ \ \text{and} \ \ V_2=V^{\alpha\beta}\oplus V^{\beta\alpha}.$$ 

By (\ref{eq: algebraic rank (0,0)}), the same computations as in \S\ref{eq: selmer structure} show that there are isomorphisms
\begin{align*}
  \Sel_{(p)}(\Q,V_1)\overset{\partial_p}{\longrightarrow}\coh^1_s(\Q_p,V_1)\overset{(\pi^s_{\alpha\alpha},\pi^s_{\beta\beta})}{\longrightarrow}\coh^1_s(\Q_p,V_1^{\alpha\alpha})\oplus\coh^1_s(\Q_p,V_1^{\beta\beta}),
\end{align*}
where $\pi_{\alpha\alpha}^s$ denotes the natural map in the singular quotient induced by the projection $V\ra V^{\alpha\alpha}$, and analogously for $\pi_{\beta\beta}^s$.
Similarly, there are dual exponential maps
\begin{align*}
  \exp^*_{\alpha\alpha}:\coh^1_s(\Q_p,V_1^{\alpha\alpha})=\coh^1(\Q_p,V_f^-\otimes V_{gh}^{\alpha\alpha}){\longrightarrow} L_p
\end{align*}
and 
\begin{align*}
\exp^*_{\beta\beta}:\coh^1_s(\Q,V_1^{\beta\beta})=\coh^1(\Q_p,V_f^-\otimes V_{gh}^{\beta\beta}){\longrightarrow} L_p
\end{align*}
which are in fact isomorphisms.

Then $\Sel_{(p)}(\Q,V_1)$ has dimension $2$ over $\Q_p$ with the canonical basis
\begin{align*}
  \zeta^{\alpha\alpha}, \ \zeta^{\beta\beta},
\end{align*}
where $\zeta^{\alpha\alpha}$ is characterized (up to scalars in $L^\times$) by the fact that
$$\exp^*_{\alpha\alpha}(\pi_{\alpha\alpha}\partial_p(\zeta^{\alpha\alpha}))=1, \ \ \text{and} \ \ \exp^*_{\beta\beta}(\pi_{\beta\beta}\partial_p(\zeta^{\alpha\alpha}))=0.$$
Similarly, 
$$\exp^*_{\alpha\alpha}(\pi_{\alpha\alpha}\partial_p(\zeta^{\beta\beta}))=0, \ \ \text{and} \ \ \exp^*_{\beta\beta}(\pi_{\beta\beta}\partial_p(\zeta^{\beta\beta}))=1.$$
Analogously, $\Sel_{(p)}(\Q,V_2)$ has dimension $2$ with basis $\zeta^{\alpha\beta}, \zeta^{\beta\alpha}$.

By Theorem \ref{thm: main}, the value $\L_p^g$ is a multiple of
$\log_{\alpha\beta}(\res_p\xi^{\beta\beta})$. On the other hand,  using the decomposition $$\Sel_{(p)}(\Q,V)=\Sel_{(p)}(\Q,V_1)\oplus\Sel_{(p)}(\Q,V_2),$$ the element $\xi^{\beta\beta}\in\Sel_{(p)}(\Q,V)$  corresponds to a multiple of $(0,\zeta^{\beta\beta})$, and this implies that $$\pi_{\beta\alpha}\res_p\xi^{\beta\beta}=0.$$
\end{proof}

\section{Numerical computations}\label{sec: Numerical computations}



In this section we present some numerical examples. They have been computed with a Sage (\cite{Sage}) implementation of Lauder's algorithms (\cite{Lauder}), adapted to work in the current setting. The code is available at \url{github.com/mmasdeu/ellipticstarkconjecture}. The data for the weight-one modular forms can be found in Alan Lauder's website.\footnote{See \url{http://people.maths.ox.ac.uk/lauder/weight1/}.}

\subsection{Dihedral case}
\begin{enumerate}[$(a)$]
\item We computed $\L_p^g(\bff,\bfg,\bfg)(2,1,1)$ with $\bff$ the Hida family passing through the modular form $f_E$ of weight $2$ attached to an elliptic curve $E/\Q$ of conductor $N_f$ and $\bfg$ attached to the weight-one modular form  $g=\theta(1_K)$ for some imaginary quadratic field $K$. The modular form $g$ belongs then to $M_1(N_g,\chi_K)_\Q$. For each of the entries in the table we give the Cremona label for the elliptic curve $E_f$, its conductor $N_f$, the field $K$, the level $N_g$ of $g$, the level $N$ such that $p^\alpha N=\operatorname{lcm}(N_f,N_g)$ with $\alpha\geq 0$ and  $p\nmid N$. In all of these cases, we obtained $\L_p^g(\bff,\bfg,\bfg)=0$ up to the working precision of $p^{10}$.   Due to computational restrictions, only in the ramified case we have been able to compute examples where $p$ divides the conductor of the elliptic curve.

  Note that all the elliptic curves arising in Table 1 below have rank $0$ over $K$, and thus the zeros obtained in this table are accounted for by Theorem~\ref{thm: CM case}.

\begin{table}[h!]
  \label{table:split}
  \begin{tabular}{lllllc}
  \toprule
$E_f$& $K$ & $N_g$ & $p$ & $N$ & $\L_p^g(\bff,\bfg,\bfg)$\\
  \midrule
11a & $\Q(\sqrt{-5})$&$20$ & $7$ & $220$ & 0\\
11a  & $\Q(\sqrt{-11})$&$11$ & $5$ & $11$& 0\\
19a & $\Q(\sqrt{-19})$&$19$ & $5$ & $19$& 0\\
19a  & $\Q(\sqrt{-19})$&$19$ & $7$ & $19$& 0\\
39a  & $\Q(\sqrt{-39})$&$39$ & $5$ & $39$& 0\\
51a  & $\Q(\sqrt{-51})$&$51$ & $5$ & $51$& 0\\
55a & $\Q(\sqrt{-55})$&$55$ & $7$ & $55$& 0\\
187a  & $\Q(\sqrt{-187})$&$187$ & $7$ & $187$& 0\\
\bottomrule\\
  \end{tabular}
    \caption{Cases with $p$ split in $K$.}
  \end{table}

In the next two tables we see instances of zeros which we expect are explained by the sign of the action of the level $N$ Atkin-Lehner operator although we have not verified this in detail.

\begin{table}[h!]
\begin{tabular}{lllllc}
\toprule
$E_f$ & $K$ & $N_g$ & $p$ & $N$ & $\L_p^g(\bff,\bfg,\bfg)$\\
  \midrule
11a  & $\Q(\sqrt{-3})$&$3$ & $5$ & $33$& 0\\
11a & $\Q(\sqrt{-11})$&$11$ & $7$ & $11$& 0\\
15a  & $\Q(\sqrt{-15})$&$15$ & $7$ & $15$& 0\\
39a  & $\Q(\sqrt{-39})$&$39$ & $7$ & $39$& 0\\
51a  & $\Q(\sqrt{-51})$&$51$ & $7$ & $51$& 0\\
67a  & $\Q(\sqrt{-67})$&$67$ & $5$ & $67$& 0\\
67a  & $\Q(\sqrt{-67})$&$67$ & $7$ & $67$& 0\\
187a  & $\Q(\sqrt{-187})$&$187$ & $5$ & $187$& 0\\

\bottomrule\\
\end{tabular}
\caption{Cases with $p$ inert in $K$.}
\end{table}

\begin{table}[h!]
  \begin{tabular}{lllllc}
    \toprule
$E_f$ & $K$ & $N_g$ & $p$ & $N$ & $\L_p^g(\bff,\bfg,\bfg)$\\
  \midrule
15a  & $\Q(\sqrt{-15})$&$15$ & $5$ & $3$& 0\\
35a  & $\Q(\sqrt{-35})$&$35$ & $5$ & $7$& 0\\
35a  & $\Q(\sqrt{-35})$&$35$ & $7$ & $5$& 0\\
55a  & $\Q(\sqrt{-55})$&$55$ & $5$ & $11$& 0\\

\bottomrule\\
  \end{tabular}

  \caption{Cases with $p$ ramified in $K$.}
\end{table}

In what follows we illustrate with examples the fact that the quantity $\L_p^g(\bff,\bfg,\bfg)$ is not always zero. 

\item In this example we fix $f$ to be attached to the elliptic curve $E_f\colon y^2=x^3+x^2-15x+18$, of conductor $N_f=120$. The weight-one form $g$ we consider has level $N_g=120$ also, and has $q$-expansion
  \begin{align*}
    g(q)&=    q + iq^{2} + iq^{3} - q^{4} -iq^{5} - q^{6} -iq^{8} - q^{9} + q^{10} -iq^{12} + q^{15} + q^{16} -iq^{18}\\
    &+ iq^{20} + q^{24} - q^{25} -iq^{27} + iq^{30} - 2q^{31} + iq^{32} + O(q^{34}),
  \end{align*}
  where $i^2=-1$. It is the theta series attached to the Dirichlet character $\epsilon$ modulo $120$ defined by
  \[
    \epsilon(97)=-1, \quad \epsilon(31)=1,\quad \epsilon(41)=-1,\quad\epsilon(61)=-1.
  \]
  The field cut out by $\epsilon$ is $K=\Q(\sqrt{-6})$, and we take $p=5$ which is split in both $L=\Q(\sqrt{-1})$ and $K$. Note that $p$ divides $N_f$ and $N_g$. We compute to precision $10$ the quantity
  \[
\L_5^g(\bff,\bfg,\bfg)(2,1,1)=   4 \cdot 5 + 3 \cdot 5^{2} + 4 \cdot 5^{3} + 3 \cdot 5^{5} + 4 \cdot 5^{6} + 3 \cdot 5^{7} + 5^{8} + 2 \cdot 5^{9} + O(5^{10}) .
    \]

    With the same setting, we take $p=13$ (now $p$ is split in $L$ but inert in $K$). We obtain
    \[
\L_{13}^g(\bff,\bfg,\bfg)(2,1,1) = 7 + 3 \cdot 13 + 10 \cdot 13^{2} + 13^{4} + 11 \cdot 13^{5} + 13^{6} + 6 \cdot 13^{7} + 4 \cdot 13^{8} + 5 \cdot 13^{9} + O(13^{10})
      \]

      
    \item Let $E_f$ be the elliptic curve $y^2+y = x^3+x^2+42x -131$ with label \texttt{175c1}. It has conductor $N_f=175$ and rank $0$. Let $g=h$ be the theta series of the character $\epsilon_1$ of $K=\Q(\alpha)$ with $\alpha$ satisfying $\alpha^2-\alpha+2=0$, of discriminant $D_K=-7$ and conductor $5\cO_K$ (which is inert, of norm $25$), satisfying
      \[
        \epsilon_1(127) = -1,\quad \epsilon_1(101)=-1.
      \]
The modular form $g$ has $q$-expansion
      \[
g(q) = q + iq^{2} - iq^{7} + iq^{8} - q^{9} - q^{11} + q^{14} - q^{16} - iq^{18} - iq^{22} - iq^{23} + q^{29} + O(q^{30}),
\]
where again $i^2=1$. For $p=13$ (which is inert in $K$ and split in $L$), we obtain
\[
\L_{13}^g(\bff,\bfg,\bfg)(2,1,1)= 1 + 3 \cdot 13 + 2 \cdot 13^{2} + 13^{3} + 12 \cdot 13^{4} + 9 \cdot 13^{5} + 3 \cdot 13^{8} + 5 \cdot 13^{9} + O(13^{10}).
\]

	\item Finally, consider the elliptic curve $E_f$ of conductor $175$ from the previous example, and for $g=h$ consider the theta series of another character $\epsilon_2$ of $K=\Q(\alpha)$, $\alpha^2-\alpha+2=0$,  of discriminant $D_K=-7$ and conductor $5\cO_K$ (inert, of norm $25$), now taking the values
          \[
            \epsilon_2(127) = 1,\quad \epsilon_2(101) = -1.
      \]
This yields a modular form $g$ with $q$-expansion
      \[
        g(q) = q + q^{2} - q^{7} - q^{8} + q^{9} - q^{11} - q^{14} - q^{16} + q^{18} - q^{22} + q^{23} - q^{29} + O(q^{30}).
      \]
     We numerically obtain for $p=13$ that
      \[
\L_{13}^g(\bff,\bfg,\bfg)(2,1,1) = 0.
\]
Again, we do not have a way to prove that $\L_{13}^g(\bff,\bfg,\bfg)(2,1,1)$ is actually zero.

\end{enumerate}

\subsection{Exotic image case}
In the non-CM setting, we have been able to compute the following example. Consider $E_f \colon y^2=x^3-17x-27$, which has conductor $N_f=124$. Let $g$ be the modular form of level $N_g=124$ and projective image $A_4$, defined as the theta series of the character $\epsilon$ of conductor $124$ having values
\[
  \epsilon(65) = \alpha^2-1,\quad \epsilon(63)=-1,
\]
where $\alpha$ satisfies $\alpha^4-\alpha^2+1=0$. The modular form $g$ has $q$-expansion
\begin{align*}
  g(q)&= q - \alpha^{3}q^{2} + \left(-\alpha^{3} + \alpha\right)q^{3} - q^{4} + \left(\alpha^{2} - 1\right)q^{5} - \alpha^{2}q^{6} + \left(\alpha^{3} - \alpha\right)q^{7} + \alpha^{3}q^{8} + \alpha q^{10} - \alpha q^{11}\\
      & + \left(\alpha^{3} - \alpha\right)q^{12}+ \left(-\alpha^{2} + 1\right)q^{13} + \alpha^{2}q^{14} + \alpha^{3}q^{15} + q^{16} - \alpha^{2}q^{17} + \left(-\alpha^{3} + \alpha\right)q^{19} + \left(-\alpha^{2} + 1\right)q^{20}\\
  &+ \left(\alpha^{2} - 1\right)q^{21} + \left(\alpha^{2} - 1\right)q^{22} + \alpha^{2}q^{24} - \alpha q^{26} + \alpha^{3}q^{27} + O(q^{28}).
\end{align*}
We let $h=g^*$ its complex conjugate, and compute with $p=13$, obtaining
\[
  \L_{13}^g(\bff,\bfg,\bfh)(2,1,1)= 1 + 5 \cdot 13 + 5 \cdot 13^{2} + 4 \cdot 13^{3} + 6 \cdot 13^{4} + 6 \cdot 13^{5} + 6 \cdot 13^{6} + 13^{7} + 3 \cdot 13^{8} + 9 \cdot 13^{9} + 9 \cdot 13^{10} + O(13^{11}).
\]



\bibliographystyle{alpha}
\bibliography{refs2}

\newcommand{\etalchar}[1]{$^{#1}$}
\begin{thebibliography}{{Hsi}17}

\bibitem[Bel]{Belaichenotes}
Jo\"{e}l Bela\"{i}che.
\newblock An introduction to the conjecture of {B}loch and {K}ato.
  \textit{{T}wo lectures at the {C}lay {M}athematical {I}nstitute {S}ummer
  {S}chool, {H}onolulu, {H}awaii, 2009}.
  http://people.brandeis.edu/~jbellaic/bkhawaii5.pdf.

\bibitem[BK90]{BK90}
Spencer Bloch and Kazuya Kato.
\newblock {$L$}-functions and {T}amagawa numbers of motives.
\newblock In {\em The {G}rothendieck {F}estschrift, {V}ol. {I}}, volume~86 of
  {\em Progr. Math.}, pages 333--400. Birkh\"{a}user Boston, Boston, MA, 1990.

\bibitem[DLR15]{DLR}
Henri Darmon, Alan Lauder, and Victor Rotger.
\newblock Stark points and {$p$}-adic iterated integrals attached to modular
  forms of weight one.
\newblock {\em Forum Math. Pi}, 3:e8, 95, 2015.

\bibitem[DR14]{DR1}
Henri Darmon and Victor Rotger.
\newblock Diagonal cycles and {E}uler systems {I}: {A} {$p$}-adic
  {G}ross-{Z}agier formula.
\newblock {\em Ann. Sci. \'Ec. Norm. Sup\'er. (4)}, 47(4):779--832, 2014.

\bibitem[DR16]{DR2b}
Henri Darmon and Victor Rotger.
\newblock Elliptic curves of rank two and generalised kato classes.
\newblock {\em Research in the Mathematical Sciences}, 3(1):27, Aug 2016.

\bibitem[DR17]{DR2}
Henri Darmon and Victor Rotger.
\newblock Diagonal cycles and {E}uler systems {II}: {T}he {B}irch and
  {S}winnerton-{D}yer conjecture for {H}asse-{W}eil-{A}rtin {$L$}-functions.
\newblock {\em J. Amer. Math. Soc.}, 30(3):601--672, 2017.

\bibitem[DR19]{DR3}
Henri Darmon and Victor Rotger.
\newblock {S}tark-{H}eegner points and generalised kato classes.
\newblock {\em Preprint}, 2019.

\bibitem[{Hsi}17]{Hsieh}
M.-L. {Hsieh}.
\newblock {Hida families and p-adic triple product {L}-functions}.
\newblock {\em American Journal of Mathematics, to appear}, May 2017.

\bibitem[Lau14]{Lauder}
Alan G.~B. Lauder.
\newblock Efficient computation of {R}ankin {$p$}-adic {$L$}-functions.
\newblock In {\em Computations with modular forms}, volume~6 of {\em Contrib.
  Math. Comput. Sci.}, pages 181--200. Springer, Cham, 2014.

\bibitem[MR04]{MR04}
Barry Mazur and Karl Rubin.
\newblock Kolyvagin systems.
\newblock {\em Mem. Amer. Math. Soc.}, 168(799):viii+96, 2004.

\bibitem[Oht95]{Ohta}
Masami Ohta.
\newblock On the {$p$}-adic {E}ichler-{S}himura isomorphism for
  {$\Lambda$}-adic cusp forms.
\newblock {\em J. Reine Angew. Math.}, 463:49--98, 1995.

\bibitem[S{\etalchar{+}}14]{Sage}
W.\thinspace{}A. Stein et~al.
\newblock {\em {S}age {M}athematics {S}oftware ({V}ersion 6.1.1)}.
\newblock The Sage Development Team, 2014.
\newblock {\tt http://www.sagemath.org}.

\end{thebibliography}

\end{document}